\documentclass[aihp,authoryear]{imsart}
\usepackage{amssymb,amsmath,amsthm,natbib,graphics,color}
\usepackage{dsfont}
\RequirePackage{natbib}
\usepackage{graphicx}
\RequirePackage[OT1]{fontenc}
\RequirePackage[colorlinks,citecolor=blue,urlcolor=blue]{hyperref}
\newcommand{\lebesgue}{\ensuremath{\lambda\hspace{-1.1ex}\lambda}}
\newcommand*{\bigtimes}{\mathop{\raisebox{-.5ex}{\hbox{\huge{$\times$}}}}}

\allowdisplaybreaks
\startlocaldefs
\numberwithin{equation}{section}
\theoremstyle{plain}
\newtheorem{thm}{Theorem}[section]
\newtheorem{theorem}[thm]{Theorem}
\newtheorem{proposition}[thm]{Proposition}
\newtheorem{lemma}[thm]{Lemma}
\theoremstyle{remark}
\newtheorem{remark}[thm]{Remark}
\theoremstyle{definition}
\newtheorem{definition}[thm]{Definition}
\theoremstyle{definition}
\newtheorem{lemdef}[thm]{Lemma and Definition}
\theoremstyle{theorem}
\newtheorem{example}[thm]{Example}
\endlocaldefs
\begin{document}
\begin{frontmatter}
\title{Joint exceedances of random products}
\runtitle{Joint exceedances of random products}

\begin{aug}
\author{\fnms{Anja} \snm{Jan\ss en}\thanksref{a}\ead[label=e1]{anja.janssen@math.uni-hamburg.de}}
\and
\author{\fnms{Holger} \snm{Drees}\thanksref{a}\ead[label=e2]{holger.drees@math.uni-hamburg.de}}
\thankstext{a}{Both authors were supported by the German Research Foundation DFG, grant no JA 2160/1.}

\address[a]{University of Hamburg, Department of Mathematics, SPST, Bundesstr.~55, D-20146 Hamburg, Germany\\
\printead{e1,e2}}

\runauthor{A. Jan\ss en and H. Drees}

\affiliation{University of Hamburg}

\end{aug}

\begin{abstract}
We analyze the joint extremal behavior of $n$ random products of the form $\prod_{j=1}^m X_j^{a_{ij}}, 1 \leq i \leq n,$ for non-negative, independent regularly varying random variables $X_1, \ldots, X_m$ and general coefficients $a_{ij} \in \mathbb{R}$. Products of this form appear for example if one observes a linear time series with gamma type innovations at $n$ points in time. We combine arguments of linear optimization and a generalized concept of regular variation on cones to show that the asymptotic behavior of joint exceedance probabilities of these products is determined by the solution of a linear program related to the matrix $\mathbf{A}=(a_{ij})$.
\end{abstract}

\begin{keyword}
\kwd{extreme value theory}
\kwd{linear programming}
\kwd{M-convergence}
\kwd{random products}
\kwd{regular variation}
\end{keyword}

\end{frontmatter}
\section{Introduction}

The tail behavior of products of powers of heavy-tailed positive random variables is of crucial importance in many applications, particularly in finance, but e.g.\ in network modeling too. In stochastic volatility time series, the log-volatilities are usually modelled as linear time series
 \begin{equation*}
 \log \sigma_t=\sum_{i=0}^\infty \alpha_i \eta_{t-i}, \;\;\; t \in \mathbb{Z},
 \end{equation*}
 If the innovations $\eta_i, i \in \mathbb{Z},$ have an exponential or gamma type tail, then the volatility $\sigma_t$ at time $t$ is a product of powers of the regularly varying random variables $X_i:=e^{\eta_i}, i \in \mathbb{Z}$, with exponents depending on $t$. To assess the risk of a volatile market at different time points $t_1,\ldots, t_n$, one thus has to analyze probabilities of the type
 $ P(\prod_{j=1}^\infty X_j^{a_{ij}}>x,\;1 \leq i \leq n)$ for suitable exponents $a_{ij}$. Using a new Breiman type result, it was shown in \cite{JaDr15} that these probabilities also determine the risk of jointly large losses over different periods.

 Similarly, in a credit risk model for $n$ risks with $k$ independent factors $Z_1,\ldots, Z_k$, the $i$-th risk is often modeled as a multiple of $\exp(\sum_{j=1}^k a_{ij} Z_j+Y_i)$ with $Y_i, 1 \leq i \leq n,$ denoting the idiosyncratic part (cf.\ \cite{EmHaMi14}). If the $Z_j, 1 \leq j \leq k,$ and $Y_i, 1 \leq i \leq n,$ have an exponential or gamma type tail, the analysis of the joint tail risk again leads to probabilities of the above type.

 In network modeling, both transmission durations $L$ and rates $R$ arising from one source may be modeled by regularly varying random variables with different indices $\alpha_L$ and $\alpha_R$ (see, e.g., \cite{MauReRo02}). The total volume of traffic from one source can then be expressed as $X_L^{\alpha_L} X_R^{\alpha_R}$ for random variables $X_L,X_R$ which are regularly varying with index $-1$. If one wants to determine the probability that different sources contribute large volumes in the same period, then again probabilities of the above type arise. Moreover, one may introduce dependencies between $X_L$ and $X_R$ for the same or for different sources by modeling them as products of (partially) identical factors with different exponents.

 As the example of log-volatilities demonstrates, the analysis of the joint tail behavior of power products is equivalent to the corresponding analysis for linear combinations of random variables with exponential type tails such that exponentials of these random variables are regularly varying. Hence the results given below allow a tail analysis in such settings too.

 Motivated by these examples, we analyze the asymptotic behavior of the probability of joint exceedances of power products, i.e. of
 \begin{equation}\label{Eq:powerproductsmot}
 P\left(\prod_{j=1}^m X_j^{a_{ij}}>c_i x, 1 \leq i \leq n\right), \;\;\; c_i>0, 1 \leq i \leq n,
 \end{equation}
 where $X_j$ are independent, non-negative regularly varying random variables and $a_{ij}$ are coefficients which may be negative. We restrict the analysis to the case of a finite number $m$ of factors, but extensions to an infinite number of factors, using a Breiman-type argument, are possible. In \cite{JaDr15}, such probabilities were investigated  under the restrictive assumption that no coefficient is negative. It was shown there, that the probabilities behaved asymptotically like a multiple of $\prod_{i=1}^m P(X_i>x^{\kappa_i})$, where $\boldsymbol{\kappa}=(\kappa_1, \ldots, \kappa_m)$ is the solution to a linear program determined by the matrix $\mathbf{A}=(a_{ij})$. While the restriction to positive coefficients $a_{ij}$ seems acceptable e.g.\ for multi-factor models, it is quite severe for log-volatility time series. Moreover, essentially only the case $n=2$ was considered in \cite{JaDr15} and the techniques employed do not easily generalize to higher dimensions, which limits the applicability of the established
results further.

 Using a recently introduced abstract concept of regular variation on cones based on the notion of $\mathbb{M}$-convergence (see \cite{HuLi06}, \cite{LiReRo14}), we can avoid all these drawbacks. To this end, we first introduce a non-standard form of regular variation on the cone $(0, \infty)^m$ for the random vector $(X_1, \ldots, X_m)$, from which one may conclude the asymptotics of probabilities of the type $P((X_1/x^{\kappa_1}, \ldots, X_m/x^{\kappa_m})\in B)$ for suitable coefficients $\kappa_1, \ldots, \kappa_m$ and sets $ B \subset(0, \infty)^m$ that are bounded away from the boundary of the cone. Unfortunately, in general the sets $M=\{x\in\mathbb{R}^m: \prod_{j=1}^m x_j^{a_{ij}}>c_i, 1 \leq i \leq n\}$ pertaining to the probabilities \eqref{Eq:powerproductsmot} are not of this type. Hence, quite involved arguments are needed to prove that the parts of $M$ close to the boundary of the cone are asymptotically negligible. To this end, auxiliary results are proved in Section \ref{Sec:auxresults} which
are
of interest on
their own. In particular, Proposition \ref{Lem:momentsconverge} can be seen as a multivariate version of the direct half of Karamata's Theorem, cf.\ Remark \ref{Rem:Karamata}.

The outline of this paper is as follows: In Section \ref{Sec:background} an abstract notion of regular variation on cones is briefly recalled, with a view towards the later application of this concept. Our main results are stated in Section \ref{Sec:mainsec}, with Theorem \ref{Th:maintheorem} being the central conclusion. Proofs of the results are given in Section \ref{Sec:proof} while some auxiliary results needed in the proofs are gathered in Section \ref{Sec:auxresults}.

\subsection*{Notations and conventions}
We write bold letters for vectors, i.e. $\mathbf{x}$ is short for $(x_1, \ldots, x_n) \in \mathbb{R}^n$ if it is clear that $\mathbf{x}$ is of dimension $n$. The $i$-th component of $\mathbf{x}$ is denoted by $x_i$. We write $\mathbf{0}$ and $\mathbf{1}$ for a (column) vector of suitable dimension which consists of only zeros or only ones. Inequalities for vectors are meant to hold componentwise. We denote the complement of a set $A$ by $A^c$ and its boundary by $\partial A$.
For $\mathbf{x} \in \mathbb{R}^n$ and $A \subset \mathbb{R}^n$ we set $d(\mathbf{x},A)=\inf_{\mathbf{a} \in A}\|\mathbf{x}-\mathbf{a}\|$, where $\| \cdot \|$ denotes the Euclidean norm. Similarly, we set $d(A,B)=\inf_{\mathbf{a} \in A, \mathbf{b} \in B}\|\mathbf{a}-\mathbf{b}\|$. For $A \subset \mathbb{R}^n$ and $r>0$ set $A^r:=\{\mathbf{x} \in \mathbb{R}^n: d(\mathbf{x},A)<r\}$. Denote the Borel sigma algebra on $\mathbb{R}^n$ by $\mathbb{B}^n$ and for a set $A \in \mathbb{B}^n$ write $\mathbb{B}^n \cap A=\{B \in \mathbb{B}^n: B \subset A\}$. We write $\lebesgue(\cdot)$ for the Lebesgue measure on $\mathbb{B}^n$.

\section{General regular variation on cones}\label{Sec:background}

In the following, we will make frequent use of an extension of the concept of multivariate regular variation which was introduced in \cite{HuLi06} and \cite{LiReRo14}. For some $m \in \mathbb{N}$, let
$\otimes:(0,\infty)\times[0,\infty)^m \to [0,\infty)^m, (\lambda,\mathbf{x})\mapsto \lambda\otimes\mathbf{x} $ be a ``multiplicatish'' mapping with the following two properties:
\begin{itemize}
 \item[(A1)] the mapping $\otimes$ is continuous,
 \item[(A2)] $1 \otimes \mathbf{x}=\mathbf{x}$ and for $\lambda_1,\lambda_2>0$ we have $\lambda_1\otimes(\lambda_2 \otimes \mathbf{x})=(\lambda_1 \cdot \lambda_2)\otimes \mathbf{x}$ for all $\mathbf{x}\in [0,\infty)^m$.
\end{itemize}

Consider a closed subcone $\mathbb{C}$ of $[0,\infty)^m$ w.r.t.\ this mapping, that is, $\lambda\otimes \mathbb{C} := \{\lambda\otimes \mathbf{x}: \mathbf{x}\in\mathbb{C}\}\subset \mathbb{C} $ for all $\lambda>0$. We assume that the following condition holds:
\begin{itemize}
 \item[(A3)] $d(\mathbf{x},\mathbb{C})< d(\lambda \otimes \mathbf{x},\mathbb{C})$ if $\lambda>1$ and $\mathbf{x} \in \mathbb{O}$.
\end{itemize}
The complement $\mathbb{O}:=[0,\infty)^m \setminus \mathbb{C}$ is an open cone, which is assumed not to be empty.

The notion of regular variation on $\mathbb{O}$ w.r.t.\ $\otimes$, which is introduced below, rests on the definition of convergence in the space $\mathbb{M}_{\mathbb{O}}$ of Borel measures on $(\mathbb{O}, \mathbb{B}^m \cap \mathbb{O})$ whose restrictions to $[0,\infty)^m \setminus \mathbb{C}^r$ are finite for each $r>0$. Denote by $\mathcal{C}^+(\mathbb{O})$  the class of non-negative, bounded and continuous functions $f$ on $\mathbb{O}$ vanishing on $\mathbb{C}^r$ for some $r > 0$.
We endow $\mathbb{M}_{\mathbb{O}}$ with the topology that is generated by open sets of the form
\begin{equation*} \left\{\nu \in  \mathbb{M}_{\mathbb{O}}: \left|\int f_i d\nu - \int f_i d\mu\right|<\epsilon, 1\le i\le k\right\}
\end{equation*}
with $\mu \in \mathbb{M}_{\mathbb{O}}, f_i \in \mathcal{C}^+(\mathbb{O}), i=1, \ldots, k,$ and $\epsilon>0$.  A Portmanteau Theorem (cf.\ \cite{LiReRo14}, Theorem 2.1) shows that convergence of measures $\nu_n$ to a measure $\nu$ in this topology is equivalent to the convergence $\nu_n(A)\to\nu(A)$ for all Borel sets $A$ in $\mathbb{O}$ which are bounded away from $\mathbb{C}$ and for which $\nu(\partial A)=0$.

\begin{definition}[see \cite{LiReRo14}, Definitions 3.1 and 3.2]\label{Def:RVkappa}
A measure $\nu \in \mathbb{M}_{\mathbb{O}}$ is called\emph{ regularly varying on $\mathbb{O}$ with respect to the mapping $\otimes$} if there exists an increasing, regularly varying function $c:[0,\infty)\to (0,\infty)$ and a nonzero measure $\mu \in \mathbb{M}_{\mathbb{O}}$ such that
\begin{equation*}c(x) \nu(x \otimes \cdot) \to \mu(\cdot) \; \mbox{in} \; \mathbb{M}_{\mathbb{O}} \;\;\; \mbox{as}\; x \to \infty.\end{equation*}
\end{definition}

\begin{lemdef}[see \cite{LiReRo14}, Theorem 3.1]\label{LemDef:index} Definition \ref{Def:RVkappa} implies that there exists an $\alpha \geq 0$ such that
\begin{equation}\label{Eq:homlimit}\mu(\lambda \otimes A)=\lambda^{-\alpha} \mu(A)
\end{equation}
for all $\lambda>0$ and Borel sets $A \subset \mathbb{O}$. We call $-\alpha$ the {\em index of regular variation of the measure $\nu$} in Definition \ref{Def:RVkappa}. The value of $\alpha$ in \eqref{Eq:homlimit} is equal to the index of regular variation of the normalizing function $c$ in Definition \ref{Def:RVkappa}.
\end{lemdef}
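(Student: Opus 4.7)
The plan is to derive the scaling relation for $\mu$ by invoking the convergence in Definition \ref{Def:RVkappa} at two different scales and exploiting the semigroup property (A2), and then read off the inequality $\alpha \geq 0$ from (A3). Fix a Borel set $A \subset \mathbb{O}$ that is bounded away from $\mathbb{C}$ and is a $\mu$-continuity set. By (A2) we have $(\lambda x) \otimes A = x \otimes (\lambda \otimes A)$, so
\begin{equation*}
c(\lambda x)\,\nu\!\left((\lambda x)\otimes A\right)
= \frac{c(\lambda x)}{c(x)}\cdot c(x)\,\nu\!\left(x \otimes (\lambda \otimes A)\right).
\end{equation*}
As $x \to \infty$, the left side tends to $\mu(A)$ by Definition \ref{Def:RVkappa} applied along the sequence $\lambda x \to \infty$, while $c(\lambda x)/c(x) \to \lambda^{\alpha}$ where $\alpha$ denotes the index of regular variation of $c$. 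Provided $\lambda \otimes A$ is also a $\mu$-continuity set bounded away from $\mathbb{C}$, the remaining factor converges to $\mu(\lambda \otimes A)$, and equating the two limits yields $\mu(\lambda \otimes A) = \lambda^{-\alpha}\mu(A)$. This simultaneously identifies the homogeneity exponent of $\mu$ with the regular-variation index of $c$.

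Next I would remove the restriction to continuity sets. The map $\mathbf{x} \mapsto \lambda \otimes \mathbf{x}$ is a homeomorphism of $\mathbb{O}$ with inverse $\lambda^{-1}\otimes (\cdot)$ by (A1) and (A2), so $\partial(\lambda \otimes A) = \lambda \otimes \partial A$, and boundedness away from $\mathbb{C}$ is preserved by (A3) (for $\lambda > 1$) and by continuity of $\otimes$ on compact subsets of $\mathbb{O}$ (for $\lambda \leq 1$). The genuinely delicate point is that $\mu(\partial A) = 0$ does not automatically give $\mu(\lambda \otimes \partial A) = 0$; this is essentially what we want to prove, so the argument is circular if taken naively. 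I would sidestep this by passing to the functional formulation: for $f \in \mathcal{C}^+(\mathbb{O})$ chosen so that $f(\lambda \otimes \cdot)$ also lies in $\mathcal{C}^+(\mathbb{O})$, the same scaling computation gives $\int f\, d\mu = \lambda^{\alpha} \int f(\lambda \otimes \cdot)\, d\mu$. A standard monotone-class/approximation argument then extends this identity to indicators of all Borel sets in $\mathbb{O}$, which is \eqref{Eq:homlimit}.

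Finally, the bound $\alpha \geq 0$ follows from (A3) together with the nonvanishing of $\mu$. Choose $r > 0$ small enough that $A_r := \mathbb{O} \setminus \mathbb{C}^r$ satisfies $0 < \mu(A_r) < \infty$, which is possible because $\mu \in \mathbb{M}_{\mathbb{O}}$ is finite on such sets and, being nonzero, charges some $A_r$ (use $\mu(\mathbb{O}) = \lim_{r \downarrow 0}\mu(A_r)$). By (A3), $\lambda \otimes A_r \subseteq A_r$ for every $\lambda > 1$, hence $\mu(\lambda \otimes A_r) \leq \mu(A_r)$, and the scaling identity forces $\lambda^{-\alpha} \leq 1$ for all $\lambda > 1$, i.e.\ $\alpha \geq 0$.

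The main obstacle is the circularity in the continuity-set step for $\lambda \otimes A$; passing to the integral formulation against test functions in $\mathcal{C}^+(\mathbb{O})$, and only then bootstrapping back to arbitrary Borel sets, is how I would resolve it cleanly without appealing to any extra regularity of $\mu$.
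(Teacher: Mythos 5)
Your argument is correct in substance but runs in the opposite logical direction from the paper's. The paper simply cites \cite{LiReRo14}, Theorem 3.1, for the existence of $\alpha\ge 0$ and the homogeneity \eqref{Eq:homlimit}, and then proves only the remaining claim by computing, for a $\mu$-continuity set $A$ bounded away from $\mathbb{C}$,
$\lim_{x\to\infty} c(\lambda x)/c(x)=\lim_{x\to\infty}\nu(x\otimes A)/\nu(x\otimes(\lambda\otimes A))=\mu(A)/\mu(\lambda\otimes A)=\lambda^{\alpha}$,
i.e.\ it \emph{deduces} the index of $c$ from the homogeneity of $\mu$. You instead take the regular variation of $c$ (which is part of Definition \ref{Def:RVkappa}) as the starting point, define $\alpha$ as its index, and \emph{derive} the homogeneity of $\mu$ --- in effect reproving the cited theorem rather than using it. Both directions are legitimate, and your version has the merit of being self-contained; the identification of the two indices is then automatic. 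You correctly identify the one delicate point (one cannot assume $\lambda\otimes A$ is again a $\mu$-continuity set) and your resolution via test functions $f\in\mathcal{C}^+(\mathbb{O})$ plus a measure-determining argument is exactly how the cited reference handles it. Two small loose ends: your justification that $\lambda\otimes A$ stays bounded away from $\mathbb{C}$ for $\lambda\le 1$ (``continuity of $\otimes$ on compact subsets'') does not apply since $A$ need not be compact; it is cleaner to establish the identity only for $\lambda>1$, where (A3) does the work, and then obtain $\lambda<1$ from the group property $\mu(A)=\mu\bigl(\lambda^{-1}\otimes(\lambda\otimes A)\bigr)$. Also, $\alpha\ge 0$ follows immediately from the assumption that $c$ is increasing; your (A3)-based argument via $\lambda\otimes A_r\subseteq A_r$ is fine but not needed.
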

\begin{proof}
Equation \eqref{Eq:homlimit} is stated in \cite{LiReRo14}, Theorem 3.1. By this and (A2), we have, for all $\lambda>0$ and $A$ in $\mathbb{O}$ which are bounded away from $\mathbb{C}$ and for which $\nu(\partial A)=0$, that
\begin{eqnarray*}
 \lim_{x \to \infty} \frac{c(\lambda x)}{c(x)}=\lim_{x \to \infty} \frac{\nu(x \otimes A)}{\nu((\lambda x) \otimes  A)}=\lim_{x \to \infty} \frac{\nu(x \otimes A)}{\nu(x \otimes (\lambda \otimes A))}= \frac{\mu(A)}{\mu(\lambda\otimes A)}=\lambda^\alpha.
\end{eqnarray*}
Therefore, $c$ is (univariate) regularly varying with index $\alpha$.
\end{proof}

Definition \ref{Def:RVkappa} unifies several different concepts of regular variation of a random vector $\mathbf{X}=(X_1, \ldots, X_m)$ with values in $[0,\infty)^m$ and distribution $\nu$.

\begin{example}[Multivariate regular variation] \label{ex:MRV}
 If $\otimes$ denotes the usual scalar multiplication $\lambda\otimes\mathbf{x}=\lambda\mathbf{x}$ and $\mathbb{C}:=\{\mathbf{0}\}$, then $\mathbb{O}=[0,\infty)^m \setminus\{\mathbf{0}\}$ and  Definition \ref{Def:RVkappa} reads as
$$ c(x)P\left(\left(\frac{X_1}{x}, \ldots, \frac{X_m}{x}\right) \in A \right)\to \mu(A) $$
as $x \to \infty$ for all $\mu$-continuity sets $A\subset\mathbb{B}^m\cap [0,\infty)^m$ bounded away from $\mathbf{0}$. This is the classical regular variation of $\mathbf{X}$ (see e.g.\ \cite{Res07}, Section 6.1.4).
\end{example}

\begin{example}[Ledford-Tawn-model]  \label{ex:LT}
If $\mathbb{C}:=[0,\infty)^m\setminus(0,\infty)^m$, then regular variation on $\mathbb{O}=(0,\infty)^m$ w.r.t.\ the usual scalar multiplication has been considered by \cite{LT97} in the bivariate case $m=2$ (after suitable marginal standardization). It is equivalent to the convergence
$$ \tilde c(x)P\left(\left(\frac{X_1}{x}, \ldots, \frac{X_m}{x}\right) \in A \right)\to \mu(A) $$
as $x \to \infty$ for all $\mu$-continuity sets $A\subset\mathbb{B}^m\cap [0,\infty)^m$ bounded away from both axes in the case $m=2$ resp.\ from $\{\mathbf{x}: x_i=0$ for some $1\le i\le m\}$ in the general case.

Note that a random vector $\mathbf{X}$ may be regularly varying in the classical sense of Example \ref{ex:MRV} and in the present sense with different normalizing functions $c$ resp.\ $\tilde c$. If $c(x)=o(\tilde c(x))$ as $x\to\infty$, then $\mathbf{X}$ is said to exhibit hidden regular variation (cf.\ \cite{Res07}, Section 9.4.1).
\end{example}

Here, we consider a different mapping $\otimes$ and different cones as well. Let, for $\boldsymbol{\kappa} \in [0,\infty)^m$,
\begin{align*} \otimes_{\boldsymbol{\kappa}}: \, & (0,\infty) \times [0,\infty)^m \to [0,\infty)^m, \\
 & (\lambda,(x_1, \ldots,x_m)) \mapsto \lambda \otimes_{\boldsymbol{\kappa}} (x_1, \ldots, x_m):=(\lambda^{\kappa_1}x_1, \ldots, \lambda^{\kappa_m}x_m).
\end{align*}
We want to analyze the asymptotic behavior of $m$-dimensional non-negative random vectors that have extreme values in $n \in\{1,\ldots, m\}$ of their components. For ease of notation, assume that the first $n$ components of $\boldsymbol{\kappa}$ are positive and the last $m-n$ components are equal to zero, so that $x^{\kappa_i}\to \infty$ as $x\to\infty$ only for $1\le i\le n$. Define the cones $\mathbb{C}_n=([0,\infty)^n\setminus (0,\infty)^n)\times [0,\infty)^{m-n}$ and $\mathbb{O}_n=[0,\infty)^m \setminus \mathbb{C}_n=(0,\infty)^n\times[0,\infty)^{m-n}$ w.r.t.\ the mapping $ \otimes_{\boldsymbol{\kappa}}$. Since
$$ d(\mathbf{x},\mathbb{C}_n)=\min\{x_1, \ldots, x_n\} < \min\{\lambda^{\kappa_1}x_1, \ldots, \lambda^{\kappa_n} x_n\}=d(\lambda \otimes_{\boldsymbol{\kappa}} \mathbf{x},\mathbb{C}_n) $$
for all $\lambda>1$ and $\mathbf{x} \in \mathbb{O}_n$, the assumptions (A1)--(A3)  are satisfied. Note that in the case $n=m$ and $\boldsymbol{\kappa}=\boldsymbol{1}$, the regular variation on $\mathbb{O}_n$ w.r.t.\ $\otimes_{\boldsymbol{\kappa}}$ is equivalent to the concept of regular variation considered in Example \ref{ex:LT}.

\begin{lemma}\label{lem:vectorisrv} Let $\boldsymbol{\kappa} \in [0,\infty)^m$ with $\kappa_i>0, 1 \leq i \leq n,$ and $\kappa_i=0, n<i\leq m,$ for some $n \leq m$. Furthermore, let $X_1, \ldots, X_m$ be independent, non-negative random variables such that $X_1, \ldots, X_n$ are regularly varying with index $-1$. Then, $P^{(X_j)_{1 \leq j \leq m }}$ is regularly varying on $\mathbb{O}_n$  w.r.t.\ $\otimes_{\boldsymbol{\kappa}}$ with index $-\alpha=-\sum_{i=1}^m\kappa_i$.

\end{lemma}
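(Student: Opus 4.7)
The plan is to exhibit explicit candidates for the normalising function and the limit measure, verify convergence on a generating collection of rectangles, and then invoke the Portmanteau characterisation stated after Definition \ref{Def:RVkappa}. Writing $\bar F_i(t):=P(X_i>t)$ for $1\le i\le n$, I would take
\begin{equation*}
c(x):=\prod_{i=1}^n 1/\bar F_i(x^{\kappa_i})
\end{equation*}
(replaced by an asymptotically equivalent monotone version if required) and, on $\mathbb{O}_n$, the product measure
\begin{equation*}
\mu:=\mu_1\otimes\cdots\otimes\mu_n\otimes P^{(X_{n+1},\ldots,X_m)},
\end{equation*}
where $\mu_i$ has Lebesgue density $y\mapsto y^{-2}$ on $(0,\infty)$. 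Since $\bar F_i$ is regularly varying of index $-1$, the factor $1/\bar F_i(x^{\kappa_i})$ is regularly varying in $x$ with index $\kappa_i$, hence $c$ has index $\alpha=\sum_{i=1}^n\kappa_i=\sum_{i=1}^m\kappa_i$. Because $d(\mathbf{y},\mathbb{C}_n)=\min_{i\le n}y_i$ on $\mathbb{O}_n$, the complement $[0,\infty)^m\setminus\mathbb{C}_n^r$ equals $\{\mathbf{y}\in[0,\infty)^m:\min_{i\le n}y_i\ge r\}$, on which $\mu$ has total mass $r^{-n}<\infty$, so $\mu$ is a nonzero element of $\mathbb{M}_{\mathbb{O}_n}$.

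Next I would verify the convergence on rectangles $A=\prod_{i=1}^n(a_i,b_i]\times B$ with $0<a_i<b_i<\infty$ and $B\in\mathbb{B}^{m-n}$ a continuity set of $P^{(X_{n+1},\ldots,X_m)}$. Using $\kappa_i=0$ for $i>n$ together with independence of the $X_j$, one obtains
\begin{equation*}
c(x)P(\mathbf{X}\in x\otimes_{\boldsymbol{\kappa}}A)=\prod_{i=1}^n\frac{P(a_ix^{\kappa_i}<X_i\le b_ix^{\kappa_i})}{\bar F_i(x^{\kappa_i})}\cdot P\bigl((X_{n+1},\ldots,X_m)\in B\bigr),
\end{equation*}
and regular variation of $X_i$ with index $-1$ yields the factor-wise limit $a_i^{-1}-b_i^{-1}=\mu_i((a_i,b_i])$, so the right-hand side converges to $\mu(A)$. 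The stated index $-\alpha$ then follows either directly from the scaling $\mu_i((\lambda^{\kappa_i}a,\lambda^{\kappa_i}b])=\lambda^{-\kappa_i}\mu_i((a,b])$ combined with $\lambda^{\kappa_i}=1$ for $i>n$, or via Lemma and Definition \ref{LemDef:index} applied to the regularly varying function $c$.

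The remaining step is to upgrade convergence on rectangles to convergence in $\mathbb{M}_{\mathbb{O}_n}$, which by the quoted Portmanteau theorem is equivalent to $\nu_x(A)\to\mu(A)$ for every $\mu$-continuity Borel set $A\subset\mathbb{O}_n$ bounded away from $\mathbb{C}_n$. I expect the main obstacle to be precisely this extension: the class of rectangles with $P^{(X_{n+1},\ldots,X_m)}$-continuous bases is a $\pi$-system generating the Borel $\sigma$-algebra on $\mathbb{O}_n$, but one must sandwich an arbitrary continuity set, up to arbitrarily small $\mu$-mass, between finite disjoint unions of such rectangles. The absolute continuity of the first $n$ marginals of $\mu$ together with the finiteness of $\mu$ on $[0,\infty)^m\setminus\mathbb{C}_n^r$ for each $r>0$ should render this approximation routine. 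An equivalent, perhaps cleaner route is to test $\int f\,d\nu_x\to\int f\,d\mu$ directly against $f\in\mathcal{C}^+(\mathbb{O}_n)$ by combining the rectangle convergence with a dominated-convergence argument, exploiting the bound $f\le\|f\|_\infty\mathbf{1}_{\{\min_{i\le n}y_i\ge r_f\}}$ valid for some $r_f>0$.
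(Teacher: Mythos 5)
Your proposal is correct and follows essentially the same route as the paper: identify $c(x)=\bigl(\prod_{i=1}^n P(X_i>x^{\kappa_i})\bigr)^{-1}$, compute the limit on a generating $\pi$-system of rectangles (the paper uses the unbounded rectangles $\bigtimes_{i\le n}(a_i,\infty)\times\bigtimes_{j>n}[b_j,\infty)$, which yields the same product limit measure $\mu$ you describe), and then upgrade to convergence in $\mathbb{M}_{\mathbb{O}_n}$. The only difference is that the step you flag as the main obstacle is dispatched in the paper by citing the relative-compactness criteria of \cite{LiReRo14} (Theorems~2.4 and 2.5) together with the fact that all accumulation points agree on the generating $\pi$-system, rather than by a hands-on sandwiching or dominated-convergence argument.
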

\begin{proof}
 From the independence of $X_1, \ldots, X_m$ and the regular variation of $X_1, \ldots, X_n$ it follows that
\begin{align} & \nonumber \lim_{x \to \infty} \frac{P^{(X_j)_{1 \leq j \leq m}}\left(x \otimes_{\boldsymbol{\kappa}} \left(\left(\bigtimes\limits_{1 \leq i \leq n}(a_i,\infty)\right) \times \left(\bigtimes\limits_{n< j \leq m}[b_j,\infty)\right)\right)\right)}{\prod_{i=1}^n P(X_i>x^{\kappa_i})}\\
\nonumber &= \lim_{x \to \infty} \frac{\prod_{i=1}^n P(X_i>a_i x^{\kappa_i})\prod_{j=n+1}^m P(X_j\geq b_j)}{\prod_{i=1}^n P(X_i>x^{\kappa_i})}\\
\label{Eq:definemu} &= \prod_{i=1}^n a_i^{-1} \prod_{j=n+1}^m P(X_j\geq b_j)=:\mu\left(\left(\bigtimes\limits_{1 \leq i \leq n}(a_i,\infty)\right) \times \left(\bigtimes\limits_{n< j \leq m}[b_j,\infty)\right)\right)
\end{align}
for all $a_i>0, 1 \leq i \leq n,$ and $b_j\geq 0, n<j \leq m$. Since these limits are finite, we have shown that the family of measures $c(x) P^{(X_j)_{1 \leq j \leq m }}\left(x \otimes_{\boldsymbol{\kappa}} \cdot\right)$, $ x>0$, is relatively compact in $\mathbb{M}_{\mathbb{O}_n}$ (cf.\ Theorems~2.4 and 2.5 in \cite{LiReRo14}), where $c(x)=\left(\prod_{i=1}^n P(X_i>x^{\kappa_i})\right)^{-1}$. Furthermore, all accumulation points of this family agree on a generating $\pi$-system. Thus, $P^{(X_j)_{1 \leq j \leq m }}$ is regularly varying on $\mathbb{O}_n$ w.r.t.\ $\otimes_{\boldsymbol{\kappa}}$. The index of regular variation follows from Lemma and Definition~\ref{LemDef:index}, since $c$ is regularly varying with index $\sum_{i=1}^n\kappa_i=\sum_{i=1}^m\kappa_i$.
\end{proof}

\section{Joint extremal behavior of random power products}\label{Sec:mainsec}
In the following, let $X_1, \ldots, X_m, m \in \mathbb{N},$ be independent, non-negative random variables, not necessarily with the same distribution. We will give asymptotics for the joint exceedance probabilities \eqref{Eq:powerproductsmot} of $n \leq m$ ``power products''
 \begin{equation}\label{Eq:powerprods} \prod_{j=1}^m X_j^{a_{ij}}, \;\;\; 1 \leq i \leq n,
 \end{equation}
over the same threshold $x$ as $x \to \infty$ for rather general values of $a_{ij} \in \mathbb{R}, 1 \leq i \leq n, 1 \leq j \leq m$. A product may take the value $+\infty$ if $X_j=0$ and $a_{ij}<0$ for some $i,j$, but throughout we use the convention that $+\infty \cdot 0=0$ and $0^0=1$.

In order to derive our results we make some assumptions about the tail behavior of the $X_j, 1 \leq j \leq m$. We assume that all or at least the ``relevant'' (in a sense specified below) $X_j, 1 \leq j \leq m,$ are regularly varying with index $-1$. We will see that the joint extremal behavior of the products in \eqref{Eq:powerprods} is closely related to the solution of the linear optimization problem
\begin{equation}\label{Eq:LOP} \mbox{ find } \mathbf{x} \geq \mathbf{0} \mbox{ such that } \mathbf{A}\mathbf{x} \geq \mathbf{1}, \;\;\; \sum_{i=1}^m x_i \to \min!
\end{equation}
with $\mathbf{A}=(a_{ij})_{1\le i\le n, 1\le j\le m} \in \mathbb{R}^{n \times m}$.
Before we give proofs for the asymptotic behavior of the joint exceedances, we want to motivate the connection of our question to the linear optimization problem in \eqref{Eq:LOP}. To this end, assume for simplicity that $a_{ij}\geq 0$ for all $1 \leq i \leq n, 1 \leq j \leq m$. Let $\mathbf{y} \geq \mathbf{0}$ be a feasible solution to \eqref{Eq:LOP}, i.e.\ $\mathbf{A}\mathbf{y} \geq \mathbf{1}$. Note that for all $x\geq 1$
\begin{equation*} X_j>x^{y_j}, \;\; 1 \leq j \leq m \;\;\; \Rightarrow \;\;\; \prod_{j=1}^m X_j^{a_{ij}}\geq x^{\sum_{j=1}^m a_{ij}y_j}\geq x, \;\;\; 1 \leq i \leq n,\end{equation*}
by \eqref{Eq:LOP} and thus
\begin{equation}\label{Eq:lowerboundheuristic} P\left(\prod_{j=1}^m X_j^{a_{ij}}\geq x, \;1 \leq i \leq n\right)\geq \prod_{j=1}^m P(X_j>x^{y_j}).
\end{equation}
If all $X_j, 1 \leq j \leq m,$ are independent and regularly varying with index $-1$, the right hand side is a regularly varying function in $x$, with index $\alpha=-\sum_{j=1}^m y_j$. Now, the smaller the value of $|\alpha|$, the slower is the decay of the function on the right hand side as $x \to \infty$. So, heuristically, if the value of $|\alpha|$ and thus the value of $\sum_{j=1}^m y_j$ is minimized, this is the most likely combination of extremal events for the single $X_j$ which leads to joint extremal behavior of the power products \eqref{Eq:powerprods}. We will see in Theorem \ref{Th:maintheorem} that the right hand side of \eqref{Eq:lowerboundheuristic} is not only a lower bound for the joint exceedance probabilities but also, under some additional assumptions about real valued $\mathbf{A}$, tail equivalent to it. For a general (not necessarily non-negative) matrix $\mathbf{A}$, the next theorem gives upper and lower bounds for the order of decay of the joint exceedance probabilities.

\begin{theorem}\label{Th:bounds}
Let $X_1, \ldots, X_m$ be independent non-negative random variables. Let $\boldsymbol{\kappa}=(\kappa_1, \ldots, \kappa_m)^T$ be an optimal solution to \eqref{Eq:LOP}.
\begin{itemize}
\item[(a)] Assume that all $X_j, 1 \leq j \leq m,$ are regularly varying with index -1. Then for all $\epsilon>0$,
\begin{equation}\label{Eq:lowerbound}
x^{-\sum_{i=1}^m\kappa_i-\epsilon} = o\left(P\left(\prod_{j=1}^m X_j^{a_{ij}}>x, \;1 \leq i \leq n\right)\right), \;\;\; x \to \infty.
\end{equation}
\item[(b)] Assume that $E(X_j^{1-\delta})<\infty, 1 \leq j \leq m,$ for all $\delta \in (0,1)$, and additionally that there exists $c>0$ such that $P(X_j \geq c)=1$ for all $1 \leq j \leq m $ with $\kappa_j=0$. Then for all $\epsilon>0$,
\begin{equation}\label{Eq:upperbound}
P\left(\prod_{j=1}^m X_j^{a_{ij}}>x, \;1 \leq i \leq n\right)= o(x^{-\sum_{i=1}^m\kappa_i+\epsilon}), \;\;\; x \to \infty.
\end{equation}
\end{itemize}
\end{theorem}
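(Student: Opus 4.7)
For part (a), the plan is to sharpen the heuristic leading to \eqref{Eq:lowerboundheuristic} so that it handles arbitrary signs of the $a_{ij}$. Fix a small $\eta>0$ (to be chosen later) and, for each $j$, a pair $0<a_j<b_j$ such that, when $\kappa_j=0$, $P(X_j\in[a_j,b_j])>0$; such pairs exist because every regularly varying $X_j$ admits some bounded interval of positive mass. Consider the event
\[
E_x:=\bigcap_{j:\,\kappa_j>0}\{a_j x^{(1+\eta)\kappa_j}\le X_j\le b_j x^{(1+\eta)\kappa_j}\}\;\cap\;\bigcap_{j:\,\kappa_j=0}\{a_j\le X_j\le b_j\}.
\]
On $E_x$ every $X_j^{a_{ij}}$, regardless of the sign of $a_{ij}$, is bounded below by a positive constant times $x^{(1+\eta)\kappa_j a_{ij}}$. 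The feasibility $\sum_j a_{ij}\kappa_j\ge 1$ then yields $\prod_j X_j^{a_{ij}}\ge C_i x^{1+\eta}$ for some $C_i>0$, so $E_x$ forces the joint exceedance once $x$ is large. By independence and regular variation of index $-1$,
\[
P(E_x)\;\sim\;K\prod_{j:\,\kappa_j>0}P(X_j>x^{(1+\eta)\kappa_j}),
\]
which is regularly varying in $x$ of index $-(1+\eta)\sum_j\kappa_j$. Noting that $\sum_j\kappa_j>0$ (else $\mathbf{A}\boldsymbol{\kappa}=\mathbf{0}\not\ge\mathbf{1}$), the choice $\eta<\epsilon/\sum_j\kappa_j$ gives $P(E_x)\cdot x^{\sum_j\kappa_j+\epsilon}\to\infty$, i.e.\ \eqref{Eq:lowerbound}.

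For part (b), the plan is to invoke LP duality. Since $\boldsymbol{\kappa}$ is optimal for \eqref{Eq:LOP}, strong duality produces $\boldsymbol{\lambda}=(\lambda_1,\ldots,\lambda_n)\in[0,\infty)^n$ with $\mathbf{A}^T\boldsymbol{\lambda}\le\mathbf{1}$ and $\sum_i\lambda_i=\sum_j\kappa_j$, and complementary slackness forces $\sum_i\lambda_i a_{ij}=1$ whenever $\kappa_j>0$. Raising each inequality $\prod_j X_j^{a_{ij}}>x$ to the $\lambda_i$-th power and multiplying over $i$ yields $\prod_j X_j^{\sum_i\lambda_i a_{ij}}>x^{\sum_j\kappa_j}$; this is legitimate because on the exceedance event all $X_j$ are positive (for $\kappa_j=0$ from $X_j\ge c$, and for $\kappa_j>0$ because complementary slackness forces some $a_{ij}>0$, whose $i$-th inequality would then fail if $X_j=0$). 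Markov's inequality at exponent $1-\delta$, combined with the independence of the $X_j$, gives
\[
P\!\left(\prod_{j=1}^m X_j^{a_{ij}}>x,\,1\le i\le n\right)\le x^{-(1-\delta)\sum_j\kappa_j}\prod_{j=1}^m E\!\left[X_j^{(1-\delta)\sum_i\lambda_i a_{ij}}\right].
\]
Each factor is finite: for $\kappa_j>0$ the exponent equals $1-\delta$ and $E[X_j^{1-\delta}]<\infty$ by hypothesis; for $\kappa_j=0$ with $\sum_i\lambda_i a_{ij}\ge 0$, the bound $X_j^{(1-\delta)\sum_i\lambda_i a_{ij}}\le 1+X_j^{1-\delta}$ suffices; and for $\kappa_j=0$ with $\sum_i\lambda_i a_{ij}<0$, the assumption $X_j\ge c$ delivers the deterministic upper bound $c^{(1-\delta)\sum_i\lambda_i a_{ij}}$. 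Choosing $\delta<\epsilon/\sum_j\kappa_j$ then gives \eqref{Eq:upperbound}.

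The main obstacle lies in the upper bound: both distributional hypotheses have to be used simultaneously in a rather delicate way that cannot be relaxed. The moment condition is essential to control the $\kappa_j>0$ factors, on which complementary slackness pins the exponent precisely to $1-\delta$; the lower bound $X_j\ge c$ for $\kappa_j=0$ is indispensable exactly in the case $\sum_i\lambda_i a_{ij}<0$, where an unrestricted $X_j$ could concentrate near $0$ and make the moment blow up. In particular, the dual exponents that arise are dictated by the LP, not by us, which is why the finiteness step is exactly balanced against the hypotheses. A secondary technical point, relevant to both parts, is to verify that the convention $0^{a_{ij}}=+\infty$ for $a_{ij}<0$ causes no trouble; in part (a) this is trivial since $E_x\subset\{X_j>0,\,\forall j\}$, and in part (b) it is handled by the positivity observation above.
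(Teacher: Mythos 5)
Your proof is correct. Part (a) is essentially the paper's argument in a light disguise: the paper picks a small box around $(1+\tfrac{\epsilon}{2\sum_j\kappa_j})\boldsymbol{\kappa}$ inside the open set $\{\mathbf{z}:\mathbf{A}\mathbf{z}>\mathbf{1}\}$ and lower-bounds the exceedance probability by the product of the resulting interval probabilities for the $X_j$; your event $E_x$ is the multiplicative version of the same box, and the sign issue for the $a_{ij}$ is handled identically (two-sided confinement of each $X_j$). Part (b) is where you genuinely diverge. The paper first invokes Lemma \ref{Lem:makepositive}~(a) to replace $\mathbf{A}$ by a surrogate $\tilde{\mathbf{A}}$ with positive entries in the columns with $\kappa_j>0$ (a construction that itself mixes in the dual optimal solution $\hat{\boldsymbol{\kappa}}$), deduces from primal optimality of the transformed LP that the exceedance event forces $\prod_j\max(X_j,1)\geq x^{\sum_j\kappa_j}$, and then applies Markov at exponent $1-\delta$. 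You instead aggregate the $n$ constraints directly with the dual optimal multipliers $\lambda_i$, using complementary slackness to pin the exponent of $X_j$ at exactly $1$ when $\kappa_j>0$ and dual feasibility ($s_j:=(\mathbf{A}^T\boldsymbol{\lambda})_j\leq 1$) together with the a.s.\ lower bound $X_j\geq c$ to control the remaining factors, before applying the same Markov step. The two arguments rest on the same LP-duality mechanism, but yours short-circuits the auxiliary lemma and is more transparent about exactly where each hypothesis enters (the moment condition for $s_j\in[0,1]$, the lower bound $X_j\geq c$ for $s_j<0$); the paper's detour through $\tilde{\mathbf{A}}$ is motivated by the fact that the lemma is needed again, in stronger forms, in the proofs of Proposition \ref{Pr:mainprop} and Theorem \ref{Th:maintheorem}. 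Your handling of the degenerate cases ($X_j=0$ and the conventions $0^{a}=+\infty$ for $a<0$, $+\infty\cdot 0=0$) via complementary slackness is sound and is a point the paper passes over more quickly.
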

\begin{remark}
In contrast to the other results of this and the following section, the assumptions of Theorem \ref{Th:bounds} (b) do not include regular variation of at least some of the $X_j, 1 \leq j \leq m$. Note, however, that regular variation with index $-1$ of $X_1, \ldots, X_m$ implies that $E(X_j^{1-\delta})<\infty, 1 \leq j \leq m,$ for all $\delta \in (0,1)$.
\end{remark}

The proof is given in Section \ref{Sec:boundsproof}. Under some additional assumptions about the structure of $\mathbf{A}$, the following Theorem \ref{Th:maintheorem} gives precise asymptotics for the joint exceedance probabilities of the random power products.
\begin{theorem}\label{Th:maintheorem}
 Let $\mathbf{A}=(a_{ij}) \in \mathbb{R}^{n \times m}, n \leq m,$ be such that the optimal solution $\boldsymbol{\kappa}$ to the linear optimization problem \eqref{Eq:LOP} is unique and non-degenerate (i.e.\ it has $n$ positive components) and denote by $\mathbf{A}_{\boldsymbol{\kappa}} \in \mathbb{R}^{n \times n}$ the matrix which is derived from $\mathbf{A}$ by deleting all columns $1 \leq j \leq m$ for which $\kappa_j=0$. Then this matrix is invertible.

Let $X_1, \ldots, X_m$ be independent non-negative random variables and assume that there exists $\epsilon>0$ such that for
\begin{align}
\nonumber 1 \leq j \leq m \mbox{ with } \kappa_j>0:& \;\;\; X_j \mbox{ is regularly varying with index } -1 \\
\label{Eq:Cond2Main} 1 \leq j \leq m \mbox{ with } \kappa_j=0:& \;\;\;
\begin{array}{cc}
E\left(X_j^{(\mathbf{1}^T\mathbf{A}_{\boldsymbol{\kappa}}^{-1}\mathbf{A})_j+\epsilon}\right)<\infty  & \mbox{ and }\\
\hspace{0.3cm} E\left(X_j^{(\mathbf{1}^T\mathbf{A}_{\boldsymbol{\kappa}}^{-1}\mathbf{A})_j-\epsilon}\right)<\infty &
\end{array}.
\end{align}
Then
\begin{eqnarray}\label{Eq:Maintheorem1} && \lim_{x \to \infty} \frac{P\left(\prod_{j=1}^m X_j^{a_{ij}}>c_i x, \;1 \leq i \leq n\right)}{\prod\limits_{\{1 \leq j \leq m: \kappa_j>0\}}P(X_j>x^{\kappa_j})}\\
\label{Eq:Maintheorem2} &=&|\det \mathbf{A}_{\boldsymbol{\kappa}}|^{-1}\frac{\prod_{i=1}^n c_i^{-(\mathbf{1}^T\mathbf{A}_{\boldsymbol{\kappa}}^{-1})_i}}{\prod_{i=1}^n (\mathbf{1}^T\mathbf{A}_{\boldsymbol{\kappa}}^{-1})_i}\prod_{j:\kappa_j=0} E\left(X_j^{ (\mathbf{1}^T\mathbf{A}_{\boldsymbol{\kappa}}^{-1}\mathbf{A})_j}\right)=:\mu\left(\bigtimes_{1 \leq i \leq n} (c_i, \infty)\right)
\end{eqnarray}
for all $c_i>0, 1 \leq i \leq n$.
In particular, the distribution of $(\prod_{j=1}^m X_j^{a_{ij}})_{1 \leq i \leq n}$ is regularly varying on $(0,\infty)^n$ w.r.t.\ scalar multiplication (in the sense of Example \ref{ex:LT}). The normalizing function can be chosen as $c(x)= (\prod_{\{1 \leq j \leq m: \kappa_j>0\}}P(X_j>x^{\kappa_j}))^{-1}$ and the corresponding limit measure is $\mu$ as above. The index of regular variation is equal to $-\sum_{j=1}^m \kappa_j=-\mathbf{1}^T\mathbf{A}_{\boldsymbol{\kappa}}^{-1}\mathbf{1}$.
\end{theorem}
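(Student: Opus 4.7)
The plan is to encode the exceedance event as a tail event for $\mathbf{X}:=(X_1,\ldots,X_m)$ under the scaling $\otimes_{\boldsymbol{\kappa}}$ on $\mathbb{O}_n$, apply Lemma \ref{lem:vectorisrv}, and excise a small neighborhood of the ``bad'' cone $\mathbb{C}_n$. As a preliminary, non-degeneracy and uniqueness of $\boldsymbol{\kappa}$ force the columns of $\mathbf{A}_{\boldsymbol{\kappa}}$ to be linearly independent (otherwise a second basic feasible solution would exist), so $\mathbf{A}_{\boldsymbol{\kappa}}$ is invertible. LP complementary slackness applied to the $n$ positive components of $\boldsymbol{\kappa}$ then yields $\sum_{j=1}^m a_{ij}\kappa_j=1$ for $1\le i\le n$, whence $(\kappa_1,\ldots,\kappa_n)^T=\mathbf{A}_{\boldsymbol{\kappa}}^{-1}\mathbf{1}$. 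The unique dual optimum equals $(\mathbf{A}_{\boldsymbol{\kappa}}^{-1})^T\mathbf{1}$, and uniqueness of the primal optimum precludes dual degeneracy, so $r_i:=(\mathbf{1}^T\mathbf{A}_{\boldsymbol{\kappa}}^{-1})_i>0$ for every $i$.

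Using $\sum_j a_{ij}\kappa_j=1$, the exceedance event rewrites as $\{\mathbf{X}\in x\otimes_{\boldsymbol{\kappa}} B_{\mathbf{c}}\}$ with $B_{\mathbf{c}}:=\{\mathbf{y}\in[0,\infty)^m:\prod_j y_j^{a_{ij}}>c_i,\;1\le i\le n\}$. For $\delta>0$ set $B_{\mathbf{c},\delta}:=B_{\mathbf{c}}\cap\{\min_{j\le n}y_j\ge\delta\}$, which lies at distance $\ge\delta$ from $\mathbb{C}_n$. For Lebesgue-a.e.\ $\delta$ the boundary $\partial B_{\mathbf{c},\delta}$ is a $\mu$-null set (it sits inside a finite union of real-analytic hypersurfaces), so Lemma \ref{lem:vectorisrv} combined with the $\mathbb{M}_{\mathbb{O}_n}$-Portmanteau theorem yields $c(x)P(\mathbf{X}\in x\otimes_{\boldsymbol{\kappa}} B_{\mathbf{c},\delta})\to\mu(B_{\mathbf{c},\delta})$. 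The value $\mu(B_{\mathbf{c}})=\lim_{\delta\downarrow0}\mu(B_{\mathbf{c},\delta})$ is then computed from the product form of $\mu$ implicit in (\ref{Eq:definemu}): on the first $n$ coordinates the substitution $u_j=\log y_j$ followed by $\mathbf{w}=\mathbf{A}_{\boldsymbol{\kappa}}\mathbf{u}$ (Jacobian $|\det\mathbf{A}_{\boldsymbol{\kappa}}|^{-1}$) turns the inner integral into a product of univariate exponential integrals that evaluate to $r_i^{-1}c_i^{-r_i}\prod_{j>n}y_j^{r_i a_{ij}}$, so integrating against $P^{(X_{n+1},\ldots,X_m)}$ reproduces exactly (\ref{Eq:Maintheorem2}); finiteness of the resulting moments is precisely the content of condition (\ref{Eq:Cond2Main}).

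The hard part will be the negligibility statement
\[
\lim_{\delta\downarrow0}\,\limsup_{x\to\infty}\, c(x)\,P\bigl(\mathbf{X}\in x\otimes_{\boldsymbol{\kappa}}(B_{\mathbf{c}}\setminus B_{\mathbf{c},\delta})\bigr)=0,
\]
which is needed because the constraints $\prod_j y_j^{a_{ij}}>c_i$ may have solutions with $y_j$ arbitrarily close to $0$ when some $a_{ij}$ are negative, so $B_{\mathbf{c}}$ itself is not bounded away from $\mathbb{C}_n$. I would decompose $B_{\mathbf{c}}\setminus B_{\mathbf{c},\delta}\subset\bigcup_{j_0\le n}\bigl(B_{\mathbf{c}}\cap\{y_{j_0}<\delta\}\bigr)$ and, on each piece, condition on the small coordinate and integrate out the remaining ones. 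Uniqueness and non-degeneracy of $\boldsymbol{\kappa}$ imply that the LP value strictly increases if $\kappa_{j_0}$ is replaced by any smaller nonnegative number, producing an extra polynomial decay factor that vanishes with $\delta$. The key tool for carrying out this reduction rigorously is Proposition \ref{Lem:momentsconverge}, the multivariate direct-half Karamata statement, which permits exchanging the $x\to\infty$ limit with the integrations, while the two-sided moment bounds in (\ref{Eq:Cond2Main}) provide uniform control of the non-regularly-varying factors $X_j$ with $\kappa_j=0$. Combining this negligibility with the convergence on $B_{\mathbf{c},\delta}$ and letting $\delta\downarrow 0$ establishes (\ref{Eq:Maintheorem1})--(\ref{Eq:Maintheorem2}); the regular variation statement on $(0,\infty)^n$ finally follows because the half-spaces $\bigtimes_{i\le n}(c_i,\infty)$ generate a $\pi$-system on $\mathbb{B}^n\cap(0,\infty)^n$.
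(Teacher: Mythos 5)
Your overall skeleton is the right one and matches the paper's: the LP preliminaries (invertibility of $\mathbf{A}_{\boldsymbol{\kappa}}$, the identity $\mathbf{A}\boldsymbol{\kappa}=\mathbf{1}$, identification of the dual optimum $(\mathbf{A}_{\boldsymbol{\kappa}}^{-1})^T\mathbf{1}$) are correct, the rewriting of the event as $\{\mathbf{X}\in x\otimes_{\boldsymbol{\kappa}}B_{\mathbf{c}}\}$ and the log/linear change of variables producing \eqref{Eq:Maintheorem2} are exactly what the paper does, and you correctly locate the difficulty in the negligibility of $B_{\mathbf{c}}\setminus B_{\mathbf{c},\delta}$. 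But that step is not actually proved, and the one tool you name for it cannot be used as you propose. Proposition \ref{Lem:momentsconverge} is a \emph{conditional} statement: its hypothesis \eqref{Eq:Lemma34i} is precisely the convergence you are trying to establish, for the reduced system obtained after conditioning on the small coordinate. The paper escapes this circularity by organizing the entire argument as an induction on the number of positive components of $\boldsymbol{\kappa}$ (Proposition \ref{Pr:mainprop}), with Breiman's lemma as the base case; inside the induction step the negligibility bound additionally requires Lemma \ref{Lem:makepositive} (replacing $\mathbf{A}$ by a matrix with positive entries in the relevant columns so that the exceedance event becomes monotone), Lemma \ref{Lem:boundedbysum} together with Potter's bounds (to control the minimum over $i$ of products with mixed-sign exponents), and Theorem \ref{Th:bounds}(b) for the far-out piece. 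Your ``extra polynomial decay factor'' is the right intuition, but extracting it is where all of this machinery is consumed; as written, your argument has no non-circular way to pass to the limit after conditioning on $\{y_{j_0}<\delta\}$.

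There is a second, independent gap: Proposition \ref{Pr:mainprop} (and the negligibility argument inside it) requires $P(X_j\ge 1)=1$ for every $j$ with $\kappa_j=0$, which the theorem does not assume. Reducing the two-sided moment conditions \eqref{Eq:Cond2Main} to that setting is a substantial portion of the paper's proof: one must raise each such $X_j$ to a power whose sign depends on the sign of the reduced cost $(\mathbf{1}^T\mathbf{A}_{\boldsymbol{\kappa}}^{-1}\mathbf{A})_j$, split according to whether the transformed variables lie above or below $1$ (the events $B(J)$), verify via \eqref{Eq:alldualcases} that $\boldsymbol{\kappa}$ remains the unique nondegenerate optimum for each transformed matrix, and perform an additional truncation at level $c$ with a separate limit $c\searrow 0$. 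Asserting that \eqref{Eq:Cond2Main} ``provides uniform control'' skips all of this. In particular, when $a_{ij}<0$ for some $j$ with $\kappa_j=0$ and $X_j$ places mass near $0$, the products can be pushed above the threshold by small values of the \emph{unscaled} coordinates, which your truncation --- constraining only the first $n$ coordinates --- does not see.
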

\begin{remark}
Given the expression in \eqref{Eq:Maintheorem2}, it is obviously necessary to assume that $E\left(X_j^{(\mathbf{1}^T\mathbf{A}_{\boldsymbol{\kappa}}^{-1}\mathbf{A})_j}\right)<\infty$ for all $1 \leq j \leq m$ with $\kappa_j=0$ in order to ensure that the limit in \eqref{Eq:Maintheorem1} is finite. The actual assumption about the moments of those $X_j$ with $\kappa_j=0$  which is stated in Theorem \ref{Th:maintheorem} is similar to the assumption in Breiman's lemma (cf.\ \cite{Br65}) in that we need a little more than the finiteness of the moments $E\left(X_j^{(\mathbf{1}^T\mathbf{A}_{\boldsymbol{\kappa}}^{-1}\mathbf{A})_j}\right)$ in order to apply a dominated convergence theorem. Furthermore, note that for $(\mathbf{1}^T\mathbf{A}_{\boldsymbol{\kappa}}^{-1}\mathbf{A})_j>0$ only the first moment assumption is necessary, the second follows for $\epsilon>0$ chosen small enough. On the other hand, if $(\mathbf{1}^T\mathbf{A}_{\boldsymbol{\kappa}}^{-1}\mathbf{A})_j<0$, then only the second assumption is
necessary. Only in the case $(\mathbf{1}^T\mathbf{A}_{\boldsymbol{\kappa}}^{-1}\mathbf{A})_j=0$ both assumptions are necessary.
\end{remark}

\begin{remark}\label{Rem:dualcoefs}
\begin{itemize}
\item[(a)] Under the given assumptions the vector $\mathbf{1}^T \mathbf{A}_{\boldsymbol{\kappa}}^{-1}$ which appears in the statement of Theorem \ref{Th:maintheorem} is the transposed of the unique optimal solution to the so-called dual problem to \eqref{Eq:LOP}, which is given by
\begin{equation}\label{Eq:DualLO} \mbox{find } \mathbf{x} \geq \mathbf{0} \; \mbox{such that } \mathbf{A}^T \mathbf{x} \leq \mathbf{1}, \;\;\; \sum_{i=1}^mx_i \to \max!,
\end{equation}
see the proof of Theorem \ref{Th:maintheorem} and \eqref{Eq:identitykappahat} for details. This implies that $(\mathbf{1}^T\mathbf{A}_{\boldsymbol{\kappa}}^{-1}\mathbf{A})^T =\mathbf{A}^T \left(\mathbf{1}^T \mathbf{A}_{\boldsymbol{\kappa}}^{-1}\right)^T \leq \mathbf{1}$ and by the assumed uniqueness and non-degeneracy of the solution even $(\mathbf{1}^T\mathbf{A}_{\boldsymbol{\kappa}}^{-1}\mathbf{A})_j<1$ for those $j$ with $\kappa_j=0$, cf.\ the remark after \eqref{Eq:alldualcases} in the proof of Theorem \ref{Th:maintheorem}. Therefore, the assumptions in \eqref{Eq:Cond2Main} are always satisfied if all $X_j, 1 \leq j \leq m,$ are regularly varying with index $-1$ and bounded away from 0 (or their distributions concentrate sufficiently little mass around 0).

\item[(b)] For a general linear program of the form
\begin{equation}\label{Eq:LOPgeneral} \mbox{ find } \mathbf{x} \geq \mathbf{0} \mbox{ such that } \mathbf{A}\mathbf{x} \geq \mathbf{1}, \;\;\; \sum_{j=1}^m z_j x_j \to \min!
\end{equation}
with optimal solution $\boldsymbol{\kappa}$, the value of $z_j-(\mathbf{1}^T\mathbf{A}_{\boldsymbol{\kappa}}^{-1}\mathbf{A})_j$ is sometimes called the reduced cost of variable $1 \leq j \leq m$. If $\kappa_j=0$ in the optimal solution, then this solution is not affected by a change of $z_j$ in the objective function in \eqref{Eq:LOPgeneral} as long as $z_j>(\mathbf{1}^T\mathbf{A}_{\boldsymbol{\kappa}}^{-1}\mathbf{A})_j$, cf.\ Section 2.5.1 in \cite{Si96}. In the context of Theorem \ref{Th:maintheorem}, the values of $(\mathbf{1}^T\mathbf{A}_{\boldsymbol{\kappa}}^{-1}\mathbf{A})_j$ for $j$ with $\kappa_j=0$ can be interpreted in a similar way, since the left or right tail behavior of these $X_j$ does not influence the extremal behavior of the random products  (except for a possible change in the multiplicative constant of the limit) as long as there exists $\epsilon>0$ such that \begin{eqnarray*}\begin{array}{cc}
    P(X_j>x)= O(x^{-(\mathbf{1}^T\mathbf{A}_{\boldsymbol{\kappa}}^{-1}\mathbf{A})_j-\epsilon}),  & \mbox{ if } (\mathbf{1}^T\mathbf{A}_{\boldsymbol{\kappa}}^{-1}\mathbf{A})_j\geq 0 \\
    P(X_j^{-1}>x)= O(x^{(\mathbf{1}^T\mathbf{A}_{\boldsymbol{\kappa}}^{-1}\mathbf{A})_j-\epsilon}),  & \mbox{ if } (\mathbf{1}^T\mathbf{A}_{\boldsymbol{\kappa}}^{-1}\mathbf{A})_j\leq 0
   \end{array}.
\end{eqnarray*}
\end{itemize}
\end{remark}

The proof of Theorem \ref{Th:maintheorem} is given in detail in Section \ref{Sec:proof}, but we want to lay out briefly the main idea here. To this end, assume w.l.o.g. that the first $n$ components of $\boldsymbol{\kappa}$ are positive and the last $m-n$ components are equal to zero. The main idea is to use the regular variation of the measure $\nu:=\bigotimes_{i=1}^m P^{X_i}$ (where $P^{X_i}$ stands for the law of $X_i$) on $\mathbb{O}_n$ w.r.t.\ $\otimes_{\boldsymbol{\kappa}}$, cf. Lemma~\ref{lem:vectorisrv}. Furthermore, we show that under our assumptions the equality $\mathbf{A} \boldsymbol{\kappa}=\mathbf{1}$ holds. Then,
\begin{eqnarray}\nonumber && P\left(\prod_{j=1}^m X_j^{a_{ij}}>c_i x, \;1 \leq i \leq n\right)\\
\label{Eq:isRVproblem} &=&P\left(\prod_{j=1}^m \left(\frac{X_j}{x^{\kappa_j}}\right)^{a_{ij}}>c_i, \;1 \leq i \leq n\right)=\nu(x \otimes_{\kappa} M) \end{eqnarray}
for
\begin{equation*}M = M(\mathbf{A},\mathbf{c}) := \left\{\mathbf{x} \in [0, \infty)^m: \prod_{j=1}^m x_j^{a_{ij}}>c_i, \; 1 \leq i \leq n\right\}.\end{equation*}
The next step is to apply the Portmanteau Theorem (cf.\ Theorem 2.1 in \cite{LiReRo14}) to show convergence of the right hand side in \eqref{Eq:isRVproblem} under suitable normalization as $x \to \infty$. Note, however, that the set $M$ is not bounded away from $\mathbb{C}_n$ (cf.\ Section \ref{Sec:background}), so we cannot directly apply this argument. As an intermediate step, we therefore have to show that we can replace $M$ by $M \cap (\delta,\infty)^n\times[0,\infty)^{m-n}, \delta>0,$ in \eqref{Eq:isRVproblem} and that under the necessary normalization the difference is negligible as $\delta \searrow 0$.

The following example illustrates the statements of Theorem \ref{Th:maintheorem} and in particular the role of negative coefficients $a_{ij}$. It also demonstrates applications of Theorem \ref{Th:maintheorem} for the extreme value analysis of time series.
\begin{example} Let $(Y_t)_{t \in \mathbb{Z}}$ be a log-linear time series of the form
\begin{equation*}
\ln(Y_t)=\ln(X_{t})-0.5\ln(X_{t+1}), \;\;\; t \in \mathbb{Z},
\end{equation*}
where $X_t, t \in \mathbb{Z},$ are i.i.d.\ and regularly varying with index $-1$. Using Theorem \ref{Th:maintheorem}, we can derive the asymptotics for the probability that three consecutive extreme observations of similar magnitude occur, i.e.\ for $P(Y_1>c_1x, Y_2>c_2x, Y_3>c_3x)$, $c_i>0, i=1,2,3.$ Rewrite this probability as
\begin{equation*}
P(X_1X_{2}^{-0.5}>c_1x, X_2X_{3}^{-0.5}>c_2x, X_3X_{4}^{-0.5}>c_3x)=P\left(\prod_{j=1}^4 X_j^{a_{ij}}>c_i x, \;\; 1 \leq i \leq 3\right)
\end{equation*}
with
\begin{equation*}
\mathbf{A}=(a_{ij})_{1 \leq i \leq 3, 1 \leq j \leq 4}=
\left(
\begin{array}{cccc}
                                                        1 & -0.5 & 0 & 0\\
                                                         0 & 1 & -0.5 & 0 \\
                                                        0 & 0 & 1 & -0.5
\end{array}
\right).
\end{equation*}
The optimal solution to \eqref{Eq:LOP} is then given by $\boldsymbol{\kappa}=(7/4, 3/2, 1, 0)$ and this solution is unique and non-degenerate. Furthermore,
\begin{equation*}(\mathbf{1}^T\mathbf{A}_{\boldsymbol{\kappa}}^{-1}\mathbf{A})_4=\left(\mathbf{1}^T\left(
\begin{array}{ccc}
                                                        1 & -0.5 & 0 \\
                                                         0 & 1 & -0.5 \\
                                                        0 & 0 & 1
\end{array}
\right)^{-1} \mathbf{A}\right)_4=\left((1,3/2,7/4)\mathbf{A}\right)_4=-7/8.
\end{equation*}
Let us first additionally assume that $E(X_4^{-7/8-\epsilon})<\infty$ for some $\epsilon>0$.
Then all assumptions of Theorem \ref{Th:maintheorem} are satisfied and the random vector $(Y_1, Y_2, Y_3)$ is regularly varying on $(0,\infty)^3$ with respect to scalar multiplication. The index of regular variation is equal to $-\sum_{j=1}^4 \kappa_j=-17/4$ and the limit measure $\mu$ is given by
$$\mu(\times_{i=1}^3 (c_i,\infty))= \frac{8}{21}c_1^{-1}c_2^{-3/2}c_3^{-7/4}E(X_4^{-7/8}).$$
Note that the negative exponents do influence the solution of the optimization problem and hence the index of regular variation. If, for instance, in the matrix $\mathbf{A}$ $-0.5$ is replaced with $-0.25$ everywhere, the optimal solution is given by $(21/16,5/4,1,0)$ and the index of regular variation equals $-57/16$.

If the assumption $E(X_4^{-7/8-\epsilon})<\infty$ is not satisfied, Theorem \ref{Th:maintheorem} may still be helpful. For instance, let us assume that $X_4^{-1}$ is regularly varying with index $-1/2$, so that the above moment assumption does not hold. But since this assumption implies that $X_4^{-1/2}$ is regularly varying with index $-1$, we can write the above joint exceedance probability as $P(\prod_{j=1}^4 \tilde{X}_j^{\tilde{a}_{ij}}>c_i x, \;\; 1 \leq i \leq 3)$ for $\tilde{X}_j=X_j, 1 \leq j \leq 3$, $\tilde{X}_4=X_4^{-1/2}$ and
\begin{equation*}
\tilde{\mathbf{A}}=(\tilde{a}_{ij})_{1 \leq i \leq 3, 1 \leq j \leq 4}=
\left(
\begin{array}{cccc}
                                                        1 & -0.5 & 0 & 0\\
                                                         0 & 1 & -0.5 & 0 \\
                                                        0 & 0 & 1 & 1
\end{array}
\right).
\end{equation*}
If we replace $\mathbf{A}$ in \eqref{Eq:LOP} by $\tilde{\mathbf{A}}$, then the optimal solution is given by $\tilde{\boldsymbol{\kappa}}=(3/2,1,0,1)$ and this solution is unique and non-degenerate. Furthermore,
\begin{equation*}(\mathbf{1}^T\tilde{\mathbf{A}}_{\tilde{\boldsymbol{\kappa}}}^{-1}
\tilde{\mathbf{A}})_3=\left(\mathbf{1}^T\left(
\begin{array}{ccc}
                                                        1 & -0.5 & 0 \\
                                                         0 & 1 & 0 \\
                                                        0 & 0 & 1
\end{array}
\right)^{-1} \tilde{\mathbf{A}}\right)_3=\left((1,3/2,1)\tilde{\mathbf{A}}\right)_3=1/4.
\end{equation*}
Since $E(X_3^{1/4+\epsilon})<\infty$ for $\epsilon \in (0,3/4)$ by our above assumptions, we can apply Theorem \ref{Th:maintheorem} also in this case to obtain again that $(Y_1, Y_2, Y_3)$ is regularly varying on $(0,\infty)^3$ with respect to scalar multiplication. But now the index of regular variation is equal to $-\sum_{j=1}^4 \tilde{\kappa}_j=-7/2$ and the limit measure $\tilde{\mu}$ is given by
$$\tilde{\mu}(\times_{i=1}^3 (c_i,\infty))= \frac{2}{3}c_1^{-1}c_2^{-3/2}c_3^{-1}E(X_3^{1/4}).$$
\end{example}

\section{Proofs and auxiliary results}
\subsection{Proof of Theorem \ref{Th:bounds}}
\label{Sec:boundsproof}
\begin{proof}
We start with the proof of (a). The optimal solution $\boldsymbol{\kappa}$ to \eqref{Eq:LOP} lies in the closure of
\begin{equation*} N(\mathbf{A}):=\left\{\mathbf{z} \in \mathbb{R}^m: \mathbf{A}\mathbf{z} > \mathbf{1}\right\}. \end{equation*}
Since the ray $\{\mathbf{z} \in \mathbb{R}^m: \mathbf{z}=(1+\delta)\boldsymbol{\kappa}, \delta >0\}$ is a subset of the open set $N(\mathbf{A})$, for all $\epsilon>0$ there exists  $\epsilon' >0$ such that
\begin{equation*} \bigotimes_{j=1}^m \left(\kappa_j\left(1+\frac{\epsilon}{2\sum_{j=1}^m\kappa_j}\right)-\epsilon',\kappa_j
\left(1+\frac{\epsilon}{2\sum_{j=1}^m\kappa_j}\right)+\epsilon'\right) \subset  N(\mathbf{A}).\end{equation*}
Thus, for $x > 1$,
\begin{eqnarray*}
&& P\left(\prod_{j=1}^m X_j^{a_{ij}}>x, 1 \leq i \leq n\right)\\
&\geq& P\left(\prod_{j=1}^m X_j^{a_{ij}}>x, 1 \leq i \leq n, \mbox{ and }  X_j>0, 1 \leq j \leq m\right)\\
&=& P\left(\left(\frac{\ln(X_j)}{\ln(x)}\right)_{1 \leq j \leq m} \in N(\mathbf{A})\right)\\
&\geq & \prod_{j=1}^m P\left(x^{\kappa_j\left(1+\frac{\epsilon}{2\sum_{j=1}^m\kappa_j}\right)-\epsilon'}<X_j
<x^{\kappa_j\left(1+\frac{\epsilon}{2\sum_{j=1}^m\kappa_j}\right)+\epsilon'}\right).
\end{eqnarray*}
By the regular variation of $X_1, \ldots, X_m$, the expression on the right-hand side is of larger order than
\begin{equation*} \prod_{j=1}^m x^{-\kappa_j\left(1+\frac{\epsilon}{2
\sum_{j=1}^m\kappa_j}\right)+\epsilon'-\frac{\epsilon}{2m}}= x^{-\sum_{j=1}^m\kappa_j-\frac{\epsilon}{2}+m\epsilon'-\frac{\epsilon}{2}} \geq x^{-\sum_{j=1}^m\kappa_j-\epsilon}, \;\;\; x \geq 1,\end{equation*}
which proves (a).

For the proof of (b) let us for simplicity assume that $c\geq 1$ so that $P(X_j\geq 1)=1$ for those $1 \leq j \leq m$ with $\kappa_j=0$. The modifications for general $c>0$ (substitute $X_j/c$ for $X_j$) are simple. Let $\tilde{\mathbf{A}}$ be as in Lemma \ref{Lem:makepositive} (a) (see Section \ref{Sec:auxresults} below), so that we have
\begin{align*}
& & \prod_{j=1}^m X_j^{a_{ij}}>x, & \;\;\;1 \leq i \leq n&\\
&\Rightarrow& \prod_{j=1}^m X_j^{\tilde{a}_{ij}}>x, & \;\;\;1 \leq i \leq n&\\
&\Rightarrow& \prod_{j=1}^m \max(X_j,1)^{\tilde{a}_{ij}}>x, & \;\;\;1 \leq i \leq n,&
\end{align*}
where we have used that $\tilde{a}_{ij}>0$ for all $1 \leq j \leq m$ with $\kappa_j>0$ and $X_j\geq 1$ for $1 \leq j \leq m$ with $\kappa_j=0$. The last inequalities imply that for $x>1$
\begin{align*}
&& \sum_{j=1}^m \tilde{a}_{ij}\frac{\ln(\max(X_j,1))}{\ln(x)}&>1,  \;\;\;1 \leq i \leq n \\
& \Rightarrow & \sum_{j=1}^m \frac{\ln(\max(X_j,1))}{\ln(x)}&\geq \sum_{j=1}^m \kappa_j  ,
\end{align*}
because otherwise $\boldsymbol{\kappa}$ could not be an optimal solution to \eqref{Eq:posLP}, in contrast to Lemma \ref{Lem:makepositive} (a) and our assumptions. Thus, for $x>1$,
\begin{equation}\label{Eq:upperboundproof} P\left(\prod_{j=1}^m X_j^{a_{ij}}>x,\;\;1 \leq i \leq n\right)\leq P\left(\prod_{j=1}^m\max(X_j,1)\geq x^{\sum_{j=1}^m\kappa_j}\right).
\end{equation}
By our assumptions, for all $\delta \in (0,1)$,
\begin{equation*} E\left(\left(\prod_{j=1}^m\max(X_j,1)\right)^{1-\delta}\right)= \prod_{j=1}^m E\left(\max(X_j,1)^{1-\delta}\right) <\infty \end{equation*}
and by the Markov inequality and \eqref{Eq:upperboundproof} we conclude \begin{equation*} P\left(\prod_{j=1}^m X_j^{a_{ij}}>x,\;\;1 \leq i \leq n\right)=O\left(x^{-(\sum_{j=1}^m\kappa_j)(1-\delta)}\right)\end{equation*} for all $\delta \in (0,1)$. Choosing $\delta < \epsilon/\sum_{j=1}^m \kappa_j$ yields \eqref{Eq:upperbound}.
\end{proof}

\subsection{Proof of Theorem \ref{Th:maintheorem}}\label{Sec:proof}
In order to prove Theorem \ref{Th:maintheorem}, we first deal with a setting that covers a slightly more general case for the solution of the linear program \eqref{Eq:LOP} than the one assumed in the statement of Theorem \ref{Th:maintheorem}. The proof of this result is by induction on the number of positive components in the unique optimal solution to \eqref{Eq:LOP}. Several auxiliary results needed for the proof can be found in Section \ref{Sec:auxresults}.

\begin{proposition}\label{Pr:mainprop}
Let $\mathbf{A}=(a_{ij}) \in \mathbb{R}^{n \times m}$ with $n, m \in \mathbb{N}$ be such that the solution $\boldsymbol{\kappa}$ to the linear optimization problem \eqref{Eq:LOP}
is unique with $\mathbf{A}\boldsymbol{\kappa}=\mathbf{1}$. Define $J=\{j \in \{1, \ldots, m\}: \kappa_j>0\}$.

Let $X_1, \ldots, X_m$ be independent non-negative random variables. Assume that
\begin{eqnarray*}
\mbox{ for }  j \in J& : & X_j \mbox{ is regularly varying with index $-1$}, \\
\mbox{ for }  j \in \{1, \ldots, m\} \setminus J& : & P(X_j\geq 1)=1 \mbox{ and } E(X_j^{1-\delta})<\infty \mbox{ for all } \delta \in (0,1).
\end{eqnarray*}
Then,
\begin{eqnarray}\nonumber && \lim_{x \to \infty} \frac{P\left(\prod_{j=1}^m X_j^{a_{ij}}>x, \;1 \leq i \leq n\right)}{\prod\limits_{j \in J}P(X_j>x^{\kappa_j})}\\
\nonumber &=&\int\limits_{M(\mathbf{A})}\prod_{j \in J}x_j^{-2}\lebesgue(\mbox{d}(x_j)_{j \in J}) \otimes P^{(X_j)_{j \notin J}}(\mbox{d}(x_j)_{j \notin J}) \in [0,\infty)
\end{eqnarray}
with $M(\mathbf{A}):=\{(x_1, \ldots, x_m) \in [0,\infty)^m: \prod_{j=1}^m x_j^{a_{ij}}>1, \; 1 \leq i \leq n\}$.
\end{proposition}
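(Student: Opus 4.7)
My approach combines the regular variation on the cone $\mathbb{O}_{|J|}$ provided by Lemma \ref{lem:vectorisrv} with a truncation argument and an induction on $|J|$. After relabelling I may assume $J=\{1,\dots,k\}$. Lemma \ref{lem:vectorisrv} shows that $\nu:=\bigotimes_{j=1}^m P^{X_j}$ is regularly varying on $\mathbb{O}_k=(0,\infty)^k\times[0,\infty)^{m-k}$ with respect to $\otimes_{\boldsymbol{\kappa}}$, with normalizing function $c(x)=\bigl(\prod_{j\in J}P(X_j>x^{\kappa_j})\bigr)^{-1}$; extending the rectangle identity \eqref{Eq:definemu} to Borel sets identifies the limit measure as
\[
\mu := \Bigl(\prod_{j\in J} x_j^{-2}\lebesgue\Bigr)\otimes\Bigl(\bigotimes_{j\notin J}P^{X_j}\Bigr),
\]
which is precisely the measure appearing on the right hand side of the claim. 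Using $\mathbf{A}\boldsymbol{\kappa}=\mathbf{1}$, the change of variables $y_j=X_j/x^{\kappa_j}$ yields
\[
P\Bigl(\prod_{j=1}^m X_j^{a_{ij}}>x,\ 1\le i\le n\Bigr) = \nu\bigl(x\otimes_{\boldsymbol{\kappa}}M(\mathbf{A})\bigr),
\]
so the claim reduces to $c(x)\,\nu(x\otimes_{\boldsymbol{\kappa}}M(\mathbf{A}))\to\mu(M(\mathbf{A}))<\infty$.

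\textbf{Truncation and Portmanteau.} The set $M(\mathbf{A})$ is in general not bounded away from $\mathbb{C}_k$ (for negative $a_{ij}$ it can have limit points on the removed axes), so one cannot apply the Portmanteau theorem directly. For $\delta>0$ set $M_\delta:=M(\mathbf{A})\cap\{x:x_j>\delta,\,j\in J\}$; then $d(M_\delta,\mathbb{C}_k)\ge\delta$, and since $\mu$ admits Lebesgue densities in the $J$-coordinates, $\mu(\partial M_\delta)=0$ for all but countably many $\delta$. For such $\delta$ the Portmanteau theorem of \cite{LiReRo14} gives
\[
c(x)\,\nu\bigl(x\otimes_{\boldsymbol{\kappa}}M_\delta\bigr)\longrightarrow\mu(M_\delta)\quad(x\to\infty),
\]
while monotone convergence gives $\mu(M_\delta)\uparrow\mu(M(\mathbf{A}))\in[0,\infty]$ as $\delta\downarrow0$.

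\textbf{Main obstacle.} The decisive step, which I expect to be the hardest, is to show
\[
\limsup_{x\to\infty}\,c(x)\,\nu\bigl(x\otimes_{\boldsymbol{\kappa}}(M(\mathbf{A})\setminus M_\delta)\bigr)\ \xrightarrow[\delta\downarrow0]{}\ 0,
\]
which simultaneously forces $\mu(M(\mathbf{A}))<\infty$ and closes the argument. I would proceed by induction on $|J|$: decompose $M(\mathbf{A})\setminus M_\delta$ according to the nonempty subset $I\subset J$ of coordinates with $x_j\le\delta$, and on each piece condition on the corresponding \emph{non-extremal} values $(X_j)_{j\in I}$. The remaining event concerns only $(X_j)_{j\notin I}$, with coefficient matrix $\mathbf{A}'$ obtained from $\mathbf{A}$ by deleting the columns in $I$ and with data-dependent thresholds. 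Uniqueness of $\boldsymbol{\kappa}$ in \eqref{Eq:LOP} forces the reduced LP to have strictly larger optimal value than $\sum_{j\in J}\kappa_j$ (any feasible point of the reduced LP, zero-padded in the $I$-coordinates, is feasible for the original LP with the same objective), so the induction hypothesis, applied with strictly fewer positive components, delivers a polynomial decay rate strictly faster than $c(x)^{-1}$. Integrating this estimate against the conditioning variables via the multivariate Karamata-type Proposition \ref{Lem:momentsconverge} and the moment condition $E(X_j^{1-\delta})<\infty$ for $j\notin J$ then produces a uniform bound that vanishes as $\delta\downarrow0$.

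\textbf{Base case and conclusion.} The base $|J|=1$ collapses, via $\mathbf{A}\boldsymbol{\kappa}=\mathbf{1}$, to an event of the form $X_1>x^{\kappa_1}\Psi$ with $\Psi$ an explicit function of the non-extremal variables; regular variation of $X_1$ with index $-1$ together with a Breiman-type dominated convergence using the moment hypotheses gives the claim and identifies the limit. Combining this with the uniform truncation bound of the previous paragraph and passing first $x\to\infty$ and then $\delta\downarrow0$ identifies the limit as $\mu(M(\mathbf{A}))<\infty$ and closes the induction.
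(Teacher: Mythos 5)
Your overall architecture (induction on $|J|$, Breiman for the base case, Lemma \ref{lem:vectorisrv} plus truncation plus Portmanteau, identification of the limit by monotone convergence) coincides with the paper's, and the base case and the Portmanteau step are fine. The gap is in the step you yourself flag as the main obstacle. You argue: delete the columns $I$ of the small coordinates, observe that the reduced LP has strictly larger optimal value than $\sum_{j\in J}\kappa_j$, and conclude that the conditional probability for $(X_j)_{j\notin I}$ decays strictly faster than $c(x)^{-1}$. This does not follow, because after conditioning on $X_j\le\delta x^{\kappa_j}$, $j\in I$, the residual thresholds are $x\prod_{j\in I}X_j^{-a_{ij}}$, which for $a_{ij}>0$ can be as small as order $x^{1-\sum_{j\in I}a_{ij}\kappa_j}\ll x$. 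A faster polynomial decay \emph{in the threshold variable} therefore does not translate into a decay faster than $x^{-\sum_j\kappa_j}$, and whether the subsequent integration over $(X_j)_{j\in I}$ restores the right order is exactly the delicate point. The paper handles this by splitting further: on the regime where the residual product is at most $x^{\kappa_1}/C$ it uses Potter's bounds on the small coordinate together with a \emph{normalized} reduced matrix $\tilde{\mathbf{A}}'$ (built via Lemma \ref{Lem:makepositive} and Lemma \ref{Lem:boundedbysum}) whose LP has the \emph{same} restricted optimal solution $(\kappa_2,\dots,\kappa_m)$ — here the gain comes not from a larger LP value but from a divergent threshold factor $D(\delta)\to\infty$ multiplying a limit that is finite by the induction hypothesis and Proposition \ref{Lem:momentsconverge}; only on the complementary regime, where the remaining variables alone exceed a threshold comparable to $x$, does the strictly-larger-optimal-value argument apply, and there it is Theorem \ref{Th:bounds}(b) that is invoked, not the induction hypothesis.

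A second, related problem: you cannot apply the induction hypothesis (Proposition \ref{Pr:mainprop} with smaller $|J|$) to the column-deleted LP, because that proposition requires the reduced problem to have a \emph{unique} optimal solution $\boldsymbol{\kappa}^\ast$ satisfying $\mathbf{A}^\ast\boldsymbol{\kappa}^\ast=\mathbf{1}$, and neither property is inherited by the LP obtained from \eqref{Eq:LOP} by deleting columns. This is why the paper verifies these properties explicitly for the normalized matrix $\tilde{\mathbf{A}}'$ before using the induction hypothesis, and falls back on the uniqueness-free Theorem \ref{Th:bounds}(b) for the other reduced problem. As written, your reduction step is therefore incomplete; filling it in essentially forces the two-regime Potter-bound argument of the paper.
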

\begin{proof}
The proof is by induction on the number $l$ of positive components in the unique optimal solution $\boldsymbol{\kappa}$. Note that $\boldsymbol{\kappa} \geq 0$ and that at least one component of $\boldsymbol{\kappa}$ has to be positive in order to satisfy $\mathbf{A} \boldsymbol{\kappa} \geq \mathbf{1}$.
In the following, we assume w.l.o.g. that the first $l \in \mathbb{N}$ components of $\boldsymbol{\kappa}$ are positive and the last $m-l$ components are equal to zero (if this is not the case, interchange the $X_i$'s and the corresponding columns of $\mathbf{A}$ accordingly).

We start now with the case $l=1$, i.e. $\kappa_1>0$ and $\kappa_j=0$ for $2 \leq j \leq m$. Our assumptions imply that $a_{i1}\kappa_1=1$ for all $1 \leq i \leq n$, i.e. $a_{11}=\ldots=a_{n1}$ and $\kappa_1=a_{i1}^{-1}, 1 \leq i \leq n$. Thus,
\begin{equation}\label{Eq:inductionstart}P\left(\prod_{j=1}^mX_j^{a_{ij}}>x, \;\; 1 \leq i \leq n\right)=P\left(X_1\min_{1 \leq i \leq n}\left(\prod_{j=2}^mX_j^{a_{ij}\kappa_1}\right)>x^{\kappa_1}\right).
\end{equation}
If the linear program
\begin{equation}\label{Eq:LOPwithoutfirstcolumn} \mbox{find } \mathbf{x} \in [0, \infty)^{m-1} \; \mbox{such that } \left(\begin{array}{ccc}
a_{12} & \cdots &  a_{1m} \\
\vdots & \ddots & \vdots \\
a_{n2} & \cdots & a_{nm}
\end{array}\right)\mathbf{x} \geq \mathbf{1}, \;\;\; \sum_{i=1}^{m-1}x_i \to \min!
\end{equation}
has no feasible solution, then
\begin{equation*}
\min_{1 \leq i \leq n}\sum_{j=2}^ma_{ij}x_{j-1}<1, \;\;\; \forall \, \mathbf{x} \in [0, \infty)^{m-1},
\end{equation*}
and thus
\begin{equation}\label{Eq:smallerthane} \min_{1 \leq i \leq n} \left(\prod_{j=2}^mX_j^{a_{ij}\kappa_1}\right)<e^{\kappa_1} \;\;\; \mbox{a.s.},
\end{equation}
because $\ln(X_2), \ldots, \ln(X_m) \geq 0$ almost surely by our assumptions.
On the other hand, if there exists a feasible solution to \eqref{Eq:LOPwithoutfirstcolumn}, then there exists $\epsilon>0$ such that all feasible solutions $\mathbf{x}$ to \eqref{Eq:LOPwithoutfirstcolumn} satisfy $\sum_{i=1}^{m-1} x_i>\kappa_1+\epsilon$, since otherwise there would exist a solution $\mathbf{x'}$ $=(0,x_1', \ldots, x_{m-1}')^T$ $\neq (\kappa_1, 0, \ldots, 0)^T$ to \eqref{Eq:LOP} with $\sum_{j=1}^mx_j'\leq \kappa_1$, in contradiction to our assumptions. Hence, an optimal solution to \eqref{Eq:LOPwithoutfirstcolumn} exists with $\sum_{i=1}^{m-1} x_i>\kappa_1+\epsilon$. By Theorem \ref{Th:bounds} (b) we have
\begin{equation*}
P\left(\min_{1 \leq i \leq n} \left(\prod_{j=2}^mX_j^{a_{ij}\kappa_1}\right)>x\right)=o\left(x^{-1-\epsilon/(2\kappa_1)}\right), \;\;\; x \to \infty.
\end{equation*}
So, whether there exists a solution to \eqref{Eq:LOPwithoutfirstcolumn} or not, we have
\begin{equation*}E\left(\left(\min_{1 \leq i \leq n} \left(\prod_{j=2}^mX_j^{a_{ij}\kappa_1}\right)\right)^{1+\epsilon/(4\kappa_1)}\right)<\infty
\end{equation*}
and we may thus apply Breiman's Lemma, cf.\ \cite{Br65}, to derive the asymptotic behavior of \eqref{Eq:inductionstart} as $x \to \infty$. This gives us
\begin{eqnarray*}
&& \lim_{x \to \infty} \frac{P\left(\prod_{j=1}^mX_j^{a_{ij}}>x, \;\; 1 \leq i \leq n\right)}{P(X_1>x^{\kappa_1})}\\
&=& E\left(\min_{1 \leq i \leq n} \left(\prod_{j=2}^mX_j^{a_{ij}\kappa_1}\right)\right) \\
&=& \int\limits_{\{\mathbf{x}\in [0,\infty)^m:x_1>\max_{1 \leq i \leq n} \left(\prod_{j=2}^mx_j^{-a_{ij}\kappa_1}\right)\}}x_1^{-2}\lebesgue(\mbox{d}x_1)\otimes P^{(X_j)_{2 \leq j \leq m}}(\mbox{d}(x_j)_{2 \leq j \leq m}) \\
&=&  \int\limits_{\{\mathbf{x}\in [0,\infty)^m:\prod_{j=1}^m x_j^{a_{ij}}>1, 1 \leq i \leq n\}}x_1^{-2}\lebesgue(\mbox{d}x_1)\otimes P^{(X_j)_{2 \leq j \leq m}}(\mbox{d}(x_j)_{2 \leq j \leq m}),
\end{eqnarray*}
which concludes the proof in the case $l=1$.

For the induction step, assume that Proposition \ref{Pr:mainprop} holds for all matrices $\mathbf{A}^\ast \in \mathbb{R}^{n^\ast \times m^\ast}, n^\ast, m^\ast \in \mathbb{N},$ for which the corresponding linear program \eqref{Eq:LOP} (with $\mathbf{A}$ replaced by $\mathbf{A}^\ast$) has a unique solution $\boldsymbol{\kappa}^\ast$ and for which $\mathbf{A}^\ast\boldsymbol{\kappa}^\ast=\mathbf{1}$ and at most $l-1 \geq 1$ components of $\boldsymbol{\kappa}^\ast$ are positive. In the following, assume that $\boldsymbol{\kappa}$ has $l$ positive components, again w.l.o.g. the first $l$ ones.

Define the map $\otimes_{\boldsymbol{\kappa}}$ as in Section \ref{Sec:background}. From Lemma \ref{lem:vectorisrv} we get that $P^{(X_j)_{1 \leq j \leq m }}$ is regularly varying on $\mathbb{O}_n$ with respect to $\otimes_{\boldsymbol{\kappa}}$.

Now, with $\mathbf{A}\boldsymbol{\kappa}=\mathbf{1}$ we have
\begin{eqnarray*}
P\left(\prod_{j=1}^m X_j^{a_{ij}}>x, \;1 \leq i \leq n\right)&=&P\left(\prod_{j=1}^m \left(\frac{X_j}{x^{\kappa_j}}\right)^{a_{ij}}>1, \;1 \leq i \leq n\right)\\
&=& P(\mathbf{X} \in x \otimes_{\boldsymbol{\kappa}} M),
\end{eqnarray*}
where $\mathbf{X}=(X_1, \ldots, X_m)$ and
\begin{equation*}M = M(\mathbf{A}) := \left\{\mathbf{x} \in [0, \infty)^m: \prod_{j=1}^m x_j^{a_{ij}}>1, \; 1 \leq i \leq n\right\}.\end{equation*}
For $\delta>0$ write
\begin{eqnarray}\nonumber \frac{P(\mathbf{X} \in x \otimes_{\boldsymbol{\kappa}} M)}{\prod_{i=1}^lP(X_i>x^{\kappa_i})}&=&\frac{P\left(\mathbf{X} \in x \otimes_{\boldsymbol{\kappa}} (M \cap \left((\delta,\infty)^l \times [0,\infty)^{m-l})\right)\right)}{\prod_{i=1}^lP(X_i>x^{\kappa_i})}\\
\label{Eq:boundedandunboundedset} &&  \hspace{-1cm}+\,\frac{P\left(\mathbf{X} \in x \otimes_{\boldsymbol{\kappa}} \left(M \cap \left((\delta,\infty)^l \times [0,\infty)^{m-l}\right)^c\right)\right)}{\prod_{i=1}^lP(X_i>x^{\kappa_i})}.
\end{eqnarray}
We will first show that the second summand in \eqref{Eq:boundedandunboundedset} tends to zero as first $x \to \infty$ and then $\delta \searrow 0$.
Note that
\begin{eqnarray}
\nonumber && \lim_{\delta \searrow 0} \limsup_{x \to \infty} \frac{P\left(\mathbf{X} \in x \otimes_{\boldsymbol{\kappa}} \left(M \cap \left((\delta,\infty)^l \times [0,\infty)^{m-l}\right)^c\right)\right)}{\prod_{i=1}^lP(X_i>x^{\kappa_i})} \\
\label{Eq:summandstozero} &\leq & \sum_{k=1}^l \lim_{\delta \searrow 0} \limsup_{x \to \infty} \frac{P\left(\mathbf{X} \in x \otimes_{\boldsymbol{\kappa}} M, X_k\leq \delta x^{\kappa_k}\right)}{\prod_{i=1}^lP(X_i>x^{\kappa_i})}.
\end{eqnarray}
We will show that all summands in \eqref{Eq:summandstozero} equal zero. To this end, note first that we may apply Lemma \ref{Lem:makepositive} (b) to the matrix $\mathbf{A}$, i.e. there exists a matrix $\tilde{\mathbf{A}}$ such that $\boldsymbol{\kappa}$ as above is the unique solution to the linear program \eqref{Eq:posLP}
with $\tilde{a}_{ij}>0$ for $1 \leq i \leq n$ and $1 \leq j \leq l$ and $\tilde{\mathbf{A}}\boldsymbol{\kappa}=\mathbf{1}$. We have
\begin{align}\nonumber P\left(\mathbf{X} \in x \otimes_{\boldsymbol{\kappa}} M, X_k\leq \delta x^{\kappa_k}\right)&=&
\label{Eq:tildebound} P\left(\prod_{j=1}^m X_j^{a_{ij}}>x, \;1 \leq i \leq n, X_k <\delta x^{\kappa_k}\right) \\
&\leq&  P\left(\prod_{j=1}^m X_j^{\tilde{a}_{ij}}>x, \;1 \leq i \leq n, X_k <\delta x^{\kappa_k}\right)
\end{align}
by Lemma \ref{Lem:makepositive} (b).
For ease of notation, we restrict ourselves to the analysis for the summand $k=1$ in \eqref{Eq:summandstozero}. For $C>0$, use \eqref{Eq:tildebound}, $\tilde{a}_{i1}>0, 1 \leq i \leq n,$ and $\tilde{\mathbf{A}}\boldsymbol{\kappa}=\mathbf{1}$ to write
\begin{eqnarray}
\nonumber && \frac{P\left(\mathbf{X} \in x \otimes_{\boldsymbol{\kappa}} M, X_1\leq \delta x^{\kappa_1}\right)}{\prod_{i=1}^l P(X_i>x^{\kappa_i})} \\
\nonumber &\leq& \frac{P\left(\prod_{j=1}^m X_j^{\tilde{a}_{ij}}>x, \;1 \leq i \leq n, X_1 \leq \delta x^{\kappa_1}\right)}{\prod_{i=1}^l P(X_i>x^{\kappa_i})} \\
\nonumber &= & \frac{\int\limits_{[0,\infty)}P\left(\frac{X_1}{x^{\kappa_1}}>z^{-1}, X_1\leq \delta x^{\kappa_1}\right)P^{\min\limits_{1 \leq i \leq n}\prod\limits_{j=2}^m \left(\frac{X_j}{x^{\kappa_j}}\right)^{\tilde{a}_{ij}/\tilde{a}_{i1}}}(\mbox{d}z)}{\prod_{i=1}^l P(X_i>x^{\kappa_i})} \\
\nonumber &\leq & \frac{\int\limits_{(\delta^{-1}, x^{\kappa_1}/C]}P\left(\frac{X_1}{x^{\kappa_1}}>z^{-1}\right)/P(X_1>x^{\kappa_1})P^{\min\limits_{1 \leq i \leq n}\prod\limits_{j=2}^m \left(\frac{X_j}{x^{\kappa_j}}\right)^{\tilde{a}_{ij}/\tilde{a}_{i1}}}(\mbox{d}z)}{\prod_{i=2}^l P(X_i>x^{\kappa_i})} \\
\nonumber && +\, \frac{\int\limits_{(x^{\kappa_1}/C, \infty)}P\left(\frac{X_1}{x^{\kappa_1}}>z^{-1}, X_1\leq \delta x^{\kappa_1}\right)P^{\min\limits_{1 \leq i \leq n}\prod\limits_{j=2}^m \left(\frac{X_j}{x^{\kappa_j}}\right)^{\tilde{a}_{ij}/\tilde{a}_{i1}}}(\mbox{d}z)}{\prod_{i=1}^l P(X_i>x^{\kappa_i})} \\
\label{Eq:summandsIandII} &=:& I(x,\delta,C) + II(x,\delta,C).
\end{eqnarray}
We deal first with $I=I(x,\delta,C)$. Use $\tilde{a}_{ij}/\tilde{a}_{i1}>0, 1 \leq i \leq n, 2 \leq j \leq l,$ and $\kappa_j=0$ and $P(X_j \geq 1)=1, l< j \leq m,$ to obtain
\begin{equation}\label{Eq:replacelnbylnplus} \min_{1 \leq i \leq n}\sum_{j=2}^m\frac{\tilde{a}_{ij}}{\tilde{a}_{i1}} \ln\left(\frac{X_j}{x^{\kappa_j}}\right) \leq \min_{1 \leq i \leq n}\sum_{j=2}^m\frac{\tilde{a}_{ij}}{\tilde{a}_{i1}} \left(\ln\left(\frac{X_j}{x^{\kappa_j}}\right)\right)_+ \;\; \mbox{a.s.}
\end{equation}
Choose $\epsilon \in (0,1)$ according to Lemma \ref{Lem:boundedbysum} such that the expression on the right hand side of \eqref{Eq:replacelnbylnplus} is a.s.\ bounded by
\begin{equation*}\frac{1-\epsilon}{1+\epsilon} \sum_{j=2}^m  \left(\ln\left(\frac{X_j}{x^{\kappa_j}}\right)\right)_+. \end{equation*}
For this $\epsilon>0$, there exists $C>0$ such that
\begin{equation*} \frac{P\left(\frac{X_1}{x^{\kappa_1}}>z^{-1}\right)}{P(X_1>x^{\kappa_1})}\leq (1+\epsilon)z^{1+\epsilon} \;\;\; \forall\, 1\leq z\leq x^{\kappa_1}/C, \, x > C\end{equation*}
by Potter's bounds applied to $x \mapsto P(X_1>x)$ (cf.\ \cite{BiGoTe87}, Theorem 1.5.6). So, for $\delta\leq 1$, the numerator of $I(x, \delta, C)$ is bounded by
\begin{eqnarray*}
&& \int_{(\delta^{-1}, \infty)}(1+\epsilon)z^{1+\epsilon} P^{\min\limits_{1 \leq i \leq n}\prod\limits_{j=2}^m \left(\frac{X_j}{x^{\kappa_j}}\right)^{\tilde{a}_{ij}/\tilde{a}_{i1}}}(\mbox{d}z)\\
&= & \int_{\tilde{M}(\tilde{\mathbf{A}}, \delta)} (1+\epsilon)\min_{1 \leq i \leq n}\prod\limits_{j=2}^m z_j^{(1+\epsilon)\tilde{a}_{ij}/\tilde{a}_{i1}}P^{\left(\frac{X_j}{x^{\kappa_j}}\right)_{2 \leq j \leq m}}(\mbox{d}\mathbf{z})\\
&\leq & \int_{\tilde{M}(\tilde{\mathbf{A}}, \delta)} (1+\epsilon)\prod_{j=2}^m \max(1,z_j)^{1-\epsilon} P^{\left(\frac{X_j}{x^{\kappa_j}}\right)_{2 \leq j \leq m}}(\mbox{d}\mathbf{z})\\
&\leq & \sum_{(\beta_j)_{2 \leq j \leq m} \in \{0,1-\epsilon\}^{m-1}}\int_{\tilde{M}(\tilde{\mathbf{A}}, \delta)} (1+\epsilon)\prod_{j=2}^m z_j^{\beta_j} P^{\left(\frac{X_j}{x^{\kappa_j}}\right)_{2 \leq j \leq m}}(\mbox{d}\mathbf{z})
\end{eqnarray*}
with
\begin{equation*} \tilde{M}(\tilde{\mathbf{A}}, \delta):=\left\{(z_2, \ldots, z_m) \in [0,\infty)^{m-1}:\min\limits_{1 \leq i \leq n}\prod\limits_{j=2}^m z_j^{\tilde{a}_{ij}/\tilde{a}_{i1}}>\delta^{-1}\right\}.\end{equation*}
Note that $\sum_{k=2}^m \tilde{a}_{ik}\kappa_k>0, 1 \leq i \leq n,$ by our assumptions about $\tilde{\mathbf{A}}$ and $\boldsymbol{\kappa}$ and let
\begin{equation}\label{Eq:Mdeltadef} D(\delta):=\min\limits_{1 \leq i \leq n}\delta^{-\tilde{a}_{i1}/\sum_{k=2}^m\tilde{a}_{ik}\kappa_k}>0
\end{equation}
and
\begin{equation}\label{Eq:atildeprime} \tilde{\mathbf{A}}':=(\tilde{a}_{ij}')_{1 \leq i \leq n, 2 \leq j \leq m}=\left(\frac{\tilde{a}_{ij}}{\sum_{k=2}^m\tilde{a}_{ik}\kappa_k}\right)_{1 \leq i \leq n, 2 \leq j \leq m}.
\end{equation}
Hence,
\begin{eqnarray*}
 \tilde{M}(\tilde{\mathbf{A}}, \delta) &\subset & \left\{(z_2, \ldots, z_m) \in [0,\infty)^{m-1}:\min\limits_{1 \leq i \leq n}\prod\limits_{j=2}^m z_j^{\tilde{a}_{ij}'}>D(\delta)\right\}.\\
\end{eqnarray*}
Note that a feasible solution to the linear program
\begin{equation}\label{Eq:LOPtildeprime} \mbox{find } \mathbf{x}=(x_2, \ldots, x_m)^T \geq \mathbf{0} \; \mbox{such that } \tilde{\mathbf{A}}' \mathbf{x} \geq \mathbf{1}, \;\;\; \sum_{i=2}^mx_i \to \min!
\end{equation}
is given by $\tilde{\boldsymbol{\kappa}}'=(\kappa_2, \ldots, \kappa_m)^T$ with $\tilde{\mathbf{A}}'\boldsymbol{\kappa}'=\mathbf{1}$. Furthermore, this is also the unique optimal solution to \eqref{Eq:LOPtildeprime}, because if there would be another feasible solution $(x_2, \ldots, x_m)^T$ to it with $\sum_{j=2}^m x_j \leq \sum_{j=2}^m\kappa_j$, then $\mathbf{x}':=(\kappa_1, x_2, \ldots, x_m)^T$ would be a solution to \eqref{Eq:posLP} as well because of
\begin{equation*}\tilde{\mathbf{A}}\mathbf{x}'=\left(\left(\tilde{a}_{i1}
\kappa_1+\sum_{j=2}^m\tilde{a}_{ij}x_j\right)_{1 \leq i \leq n}\right)^T\geq \left(\left(\tilde{a}_{i1}\kappa_1+\sum_{k=2}^m\tilde{a}_{ik}\kappa_k\right)_{1 \leq i \leq n}\right)^T = \mathbf{1}, \end{equation*}
as $\tilde{\mathbf{A}}\boldsymbol{\kappa}=\mathbf{1}$.
This would lead to a contradiction to our assumption about the uniqueness of $\boldsymbol{\kappa}$ and Lemma \ref{Lem:makepositive} (b). Thus,
\begin{eqnarray*}
I&\leq& \sum_{(\beta_j)_{2 \leq j \leq m} \in \{0,1-\epsilon\}^{m-1}}\frac{\int\limits_{\left\{\mathbf{z}:\min\limits_{1 \leq i \leq n}\prod\limits_{j=2}^m z_j^{\tilde{a}'_{ij}}>D(\delta)\right\}}(1+\epsilon)\prod_{j=2}^m z_j^{\beta_j} P^{\left(\frac{X_j}{x^{\kappa_j}}\right)_{2 \leq j \leq m}}(\mbox{d}\mathbf{z})}{\prod_{i=2}^l P(X_i>x^{\kappa_i})} \\
&=& \sum_{(\beta_j)_{2 \leq j \leq m} \in \{0,1-\epsilon\}^{m-1}} \frac{\int\limits_{\left\{\mathbf{z}:\min\limits_{1 \leq i \leq n}\prod\limits_{j=2}^m z_j^{\tilde{a}'_{ij}}>1\right\}}\prod_{j=2}^m z_j^{\beta_j} P^{\left(\frac{X_j}{(D(\delta)x)^{\kappa_j}}\right)_{2 \leq j \leq m}}(\mbox{d}\mathbf{z})}{\prod_{i=2}^l P(X_i>(D(\delta)x)^{\kappa_i})} \\
&& \cdot \frac{\prod_{i=2}^l P(X_i>(D(\delta)x)^{\kappa_i})}{\prod_{i=2}^l P(X_i>x^{\kappa_i})}(1+\epsilon)\prod_{j=2}^l D(\delta)^{\beta_j \kappa_j}.
\end{eqnarray*}
Now, by Proposition \ref{Lem:momentsconverge} combined with the induction hypothesis and the properties of $\tilde{\mathbf{A}}'$, the first factor of each summand in the above expression converges to the finite expression
\begin{equation*} \int\limits_{M(\tilde{\mathbf{A}}')}\prod_{j=2}^m x_j^{\beta_j} \prod_{j=2}^l x_j^{-2}\lebesgue(\mbox{d}(x_j)_{2 \leq j \leq l}) \otimes P^{(X_j)_{l< j \leq m}}(\mbox{d}(x_j)_{l< j \leq m}),\end{equation*}
as $x \to \infty,$ whereas the remainder of the expression converges to
\begin{equation*}(1+\epsilon)\prod_{j=2}^l D(\delta)^{-\kappa_j(1-\beta_j)} \end{equation*}
with $\kappa_j(1-\beta_j)>0$ for $2 \leq j \leq l$ by our assumptions. The first limit does not depend on the value of $\delta>0$, while the second converges to 0 as $\delta \searrow 0$ and thus $D(\delta) \to \infty$ by \eqref{Eq:Mdeltadef}. We have thus shown that
\begin{equation*} \lim_{\delta \searrow 0}\limsup_{x \to \infty}I(x,\delta,C)=0\end{equation*}
for $C$ large enough.
Let us now deal with $II=II(x,\delta,C)$ from \eqref{Eq:summandsIandII}. We have
\begin{align}\nonumber II &\leq \frac{P\left(\min\limits_{1 \leq i \leq n}\prod\limits_{j=2}^m \left(\frac{X_j}{x^{\kappa_j}}\right)^{\tilde{a}_{ij}/\tilde{a}_{i1}}>x^{\kappa_1}/C\right)}{\prod_{i=1}^l P(X_i>x^{\kappa_i})}\\
\nonumber &\leq  \frac{P\left(\prod\limits_{j=2}^m \max(1,X_j)^{\tilde{a}_{ij}}>x/C^{\tilde{a}_{i1}}, \;\; 1 \leq i \leq n\right)}{\prod_{i=1}^l P(\max(1,X_i)>x^{\kappa_i})}\cdot \frac{\prod_{i=1}^l P(\max(1,X_i)>x^{\kappa_i})}{\prod_{i=1}^l P(X_i>x^{\kappa_i})} \\
 \label{Eq:boundforII} &\leq  \frac{P\left(\prod\limits_{j=2}^m \max(1,X_j)^{\tilde{a}_{ij}}>x D'(C), \;\; 1 \leq i \leq n\right)}{\prod_{i=1}^l P(\max(1,X_i)>x^{\kappa_i})}\cdot \frac{\prod_{i=1}^l P(\max(1,X_i)>x^{\kappa_i})}{\prod_{i=1}^l P(X_i>x^{\kappa_i})},
\end{align}
where we set
\begin{equation*} D'(C):=\min_{1 \leq i \leq n} C^{-\tilde{a}_{i1}}>0 \end{equation*}
for abbreviation.
Set
\begin{equation*} \tilde{\mathbf{A}}'':=(\tilde{a}_{ij})_{1 \leq i \leq n, 2 \leq j \leq m} \in \mathbb{R}^{n \times (m-1)} \end{equation*}
and consider the linear program
\begin{equation}\label{Eq:LOPtildeprimeprime} \mbox{find } \mathbf{x}=(x_2, \ldots, x_m)^T \geq \mathbf{0} \; \mbox{such that } \tilde{\mathbf{A}}'' \mathbf{x} \geq \mathbf{1}, \;\;\; \sum_{i=2}^mx_i \to \min!
\end{equation}
If this linear program has no feasible solution then
\begin{equation*}\min_{1 \leq i \leq n}\prod_{j=2}^m\max(1,X_j)^{\tilde{a}_{ij}}<e \;\; \mbox{a.s.} \end{equation*}
(cf.\ \eqref{Eq:smallerthane} for analogous reasoning), so the first factor in \eqref{Eq:boundforII} equals 0 for $x$ large enough. On the other hand, if there exists a feasible solution to \eqref{Eq:LOPtildeprimeprime}, then there exists $\epsilon>0$ such that all feasible solutions $(x_2, \ldots, x_m)^T$ to \eqref{Eq:LOPtildeprimeprime} satisfy $\sum_{j=2}^m x_j>\sum_{j=1}^m\kappa_j + \epsilon$, since otherwise there would exist a solution $\mathbf{x'}$ $=(0,x_2, \ldots, x_m)^T$ $\neq \boldsymbol{\kappa}$ to \eqref{Eq:posLP} with $\sum_{j=1}^mx_j'\leq \sum_{j=1}^m\kappa_j$, in contradiction to our assumptions and Lemma \ref{Lem:makepositive} (b). In the latter case, the numerator of the first factor in \eqref{Eq:boundforII} is of smaller order than $x^{-\sum_{j=1}^m\kappa_j-\epsilon/2}$ as $x \to \infty$ by Theorem \ref{Th:bounds} (b), while the denominator is regularly varying in $x$ with index $-\sum_{j=1}^m\kappa_j$ and the second factor in \eqref{Eq:boundforII} equals 1 for $x \geq 1$. So, in both
cases,
and for all $C>0$
\begin{equation*} \lim_{\delta \searrow 0}\limsup_{x \to \infty}II(x,\delta,C)=\limsup_{x \to \infty}II(x,\delta,C)=0,\end{equation*}
and the first summand in \eqref{Eq:summandstozero} is equal to zero. All other summands can be treated analogously.

Taken together, we have shown that
\begin{equation}\label{Eq:unboundedisfinite}
\lim_{\delta \searrow 0} \limsup_{x \to \infty} \frac{P\left(\mathbf{X} \in x \otimes_{\boldsymbol{\kappa}} \left(M \cap \left((\delta,\infty)^l \times [0,\infty)^{m-l}\right)^c\right)\right)}{\prod_{i=1}^l P(X_i>x^{\kappa_i})}=0.
\end{equation}
With $\mu(\cdot)$ as defined in \eqref{Eq:definemu} we have
\begin{eqnarray*}
 && \lim_{x \to \infty} \frac{P\left(\prod_{j=1}^mX_j^{a_{ij}}>x, 1 \leq i \leq n\right)}{\prod_{i=1}^lP(X_i>x^{\kappa_i})}= \lim_{x \to \infty} \frac{P(\mathbf{X} \in x \otimes_{\boldsymbol{\kappa}} M)}{\prod_{i=1}^lP(X_i>x^{\kappa_i})}\\
 &=& \lim_{\delta \searrow 0} \lim_{x \to \infty}  \frac{P\left(\mathbf{X} \in x \otimes_{\boldsymbol{\kappa}} (M \cap \left((\delta,\infty)^l \times [0,\infty)^{m-l})\right)\right)}{\prod_{i=1}^lP(X_i>x^{\kappa_i})}\\
 &=& \lim_{\delta \searrow 0} \mu\left(M \cap \left((\delta,\infty)^l \times [0,\infty)^{m-l})\right)\right)\\
 &=& \lim_{\delta \searrow 0} \int\limits_{M \cap \left((\delta,\infty)^l \times [0,\infty)^{m-l}\right)}\prod\limits_{j=1}^l x_l^{-2}\lebesgue(\mbox{d}(x_j)_{1 \leq j \leq l})\otimes P^{(X_j)_{l < j \leq m}}(\mbox{d}(x_j)_{l< j \leq m}) \\
 &=& \int\limits_{M}\prod\limits_{j=1}^l x_l^{-2}\lebesgue(\mbox{d}(x_j)_{1 \leq j \leq l})\otimes P^{(X_j)_{l< j \leq m}}(\mbox{d}(x_j)_{l< j \leq m}),
\end{eqnarray*}
by monotone convergence where we used that
\begin{equation*} M \cap\left((0,\infty)^l \times [0,\infty)^{m-l}\right)=M, \end{equation*}
because $\kappa_j>0$ for $1 \leq j \leq l$ implies that at least one $1 \leq i \leq n$ exists with $a_{ij}>0$. But then $\min_{1 \leq i \leq n} \prod_{l=1}^m X_l^{a_{il}}=0$ as soon as $X_j=0, 1 \leq j \leq l$, so $M \subset \left((0,\infty)^l \times [0,\infty)^{m-l}\right)$.

This concludes the proof of Proposition \ref{Pr:mainprop}.
\end{proof}
\begin{proof}[Proof of Theorem \ref{Th:maintheorem}]
Let us again assume w.l.o.g.\ that the first $n \geq 1$ components of $\boldsymbol{\kappa}$ are positive and the last $m-n\geq 0$ components are equal to zero. We start with some implications of our assumptions about the matrix $\mathbf{A}$. Since the optimal solution $\boldsymbol{\kappa}$ to \eqref{Eq:LOP} is unique it must be a vertex of the polygon defined by $\{\mathbf{x} \geq \mathbf{0}:\mathbf{A}\mathbf{x}\geq \mathbf{1}\}$, cf.\ \cite{Si96}, Theorem 1.5. Each vertex of $\{\mathbf{x} \geq \mathbf{0}:\mathbf{A}\mathbf{x}\geq \mathbf{1}\}$ corresponds to a so-called basic feasible solution (cf.\ \cite{Si96}, Theorem 1.2) of the standard form linear program
\begin{equation}\label{Eq:LOPstandard} \mbox{find } \mathbf{x} \in \mathbb{R}^{m+n}\; \mbox{such that }  (\mathbf{A};(-1)\cdot \mathbf{E}_n)\mathbf{x} =\mathbf{1}, \;\; \mathbf{x} \geq \mathbf{0},\;\;\; \sum_{i=1}^m x_i \to \min!,
\end{equation}
where the matrix $(\mathbf{A},(-1)\cdot \mathbf{E}_n) \in \mathbb{R}^{n \times (m+n)}$ consists of the columns of $\mathbf{A}$ in its first $m$ columns and of the columns of the $n$-dimensional unit matrix, $\mathbf{E}_n$, multiplied with $-1$ in its last $n$ columns. The basic feasible solutions of \eqref{Eq:LOPstandard} can be found by choosing $n$ linearly independent columns of $(\mathbf{A};(-1)\cdot \mathbf{E}_n)$ with indices $B \subset \{1, \ldots, m+n\}$, denoting the resulting matrix by $(\mathbf{A};(-1)\cdot \mathbf{E}_n)_B$ and deriving $\mathbf{s}_B=((s_j)_{j \in B})^T:=((\mathbf{A};(-1)\cdot \mathbf{E}_n)_B)^{-1}\mathbf{1}$. If $\mathbf{s}_B \geq \mathbf{0}$, then we call
\begin{equation*}\mathbf{x}_B=(x_1, \ldots, x_{m+n})^T \;\;\; \mbox{with} \;\; \begin{cases}
                                                              x_j=s_j, \;\; &\mbox{if } j \in B, \\
                                                              x_j=0, \;\; &\mbox{if } j \in \{1, \ldots, m+n\} \setminus B
                                                              \end{cases} \end{equation*}
a basic feasible solution to \eqref{Eq:LOPstandard}. The corresponding solution to \eqref{Eq:LOP} is given by the first $m$ components of $\mathbf{x}_B$, the remaining last $n$ components of $\mathbf{x}_B$ are called slack variables. Since we assumed that the first $n$ components of $\boldsymbol{\kappa}$ are positive and that the optimal solution is unique, it can only correspond to the basic feasible solution with $B=\{1, \ldots, n\}$ which implies that $\mathbf{A}_{\boldsymbol{\kappa}}$, the matrix which consists of only the first $n$ columns of $\mathbf{A}$, is invertible and $(\kappa_1, \ldots, \kappa_n)^T=(\mathbf{A}_{\boldsymbol{\kappa}})^{-1}\mathbf{1}$ which leads to $\mathbf{A}\boldsymbol{\kappa}=\mathbf{1}$. Thus, the assumptions about $\mathbf{A}$ of Theorem \ref{Th:maintheorem} are a special case of the assumptions about $\mathbf{A}$ of Proposition \ref{Pr:mainprop}. Furthermore, the optimal value of \eqref{Eq:LOP} equals
\begin{equation}\label{Eq:optval}
\sum_{j=1}^m\kappa_j=\sum_{j=1}^n\kappa_j=\mathbf{1}^T(\mathbf{A}_{\boldsymbol{\kappa}}^{-1})\mathbf{1}. 
\end{equation}

Since we assumed $\boldsymbol{\kappa}$ to be unique and non-degenerate, the optimal solution to the dual problem \eqref{Eq:DualLO} is unique and non-degenerate as well, cf.\ \cite{Si96}, Theorem 2.11. Furthermore, the optimal solution $\hat{\boldsymbol{\kappa}}$ to \eqref{Eq:DualLO} is in our case given by
\begin{equation}\label{Eq:identitykappahat} \hat{\boldsymbol{\kappa}}=(\mathbf{A}_{\boldsymbol{\kappa}}^{-1})^T\mathbf{1},
\end{equation}
cf.\ \cite{Si96}, Theorem 2.2. This explains Remark~\ref{Rem:dualcoefs}~(a).

Again w.l.o.g.\ assume in the following that $(\mathbf{1}^T\mathbf{A}_{\boldsymbol{\kappa}}^{-1}\mathbf{A})_j=0$ for $n<j \leq n'$ with $n \leq n' \leq m$ and that $(\mathbf{1}^T\mathbf{A}_{\boldsymbol{\kappa}}^{-1}\mathbf{A})_j\neq 0$ for $n'<j \leq m$.
Define now for $1 \leq j \leq m$
\begin{equation}\label{Eq:DefXtilde} \hat{X}_j=\begin{cases}
               X_j, & \mbox{ for } j \leq n, \\
               X_j^{\epsilon}, & \mbox{ for } n< j \leq n', \\
               X_j^{(\mathbf{1}^T\mathbf{A}_{\boldsymbol{\kappa}}^{-1}\mathbf{A})_j+\epsilon}, & \mbox{ for } j>n', (\mathbf{1}^T\mathbf{A}_{\boldsymbol{\kappa}}^{-1}\mathbf{A})_j > 0, \\
               X_j^{(\mathbf{1}^T\mathbf{A}_{\boldsymbol{\kappa}}^{-1}\mathbf{A})_j-\epsilon}, & \mbox{ for } j>n', (\mathbf{1}^T\mathbf{A}_{\boldsymbol{\kappa}}^{-1}\mathbf{A})_j < 0,
               \end{cases} \end{equation}
with $\epsilon>0$ as in the statement of Theorem \ref{Th:maintheorem} and set furthermore $\hat{\mathbf{A}}$ with
\begin{equation}\label{Eq:DefAtilde}\hat{a}_{ij}=\begin{cases}
                 a_{ij}, & \mbox{ for } 1 \leq i \leq n, j \leq n, \\
                 \frac{a_{ij}}{\epsilon}, & \mbox{ for } 1 \leq i \leq n, n<j \leq n', \\
                 \frac{a_{ij}}{(\mathbf{1}^T\mathbf{A}_{\boldsymbol{\kappa}}^{-1}\mathbf{A})_j+\epsilon}, &  \mbox{ for } 1 \leq i \leq n, j>n', (\mathbf{1}^T\mathbf{A}_{\boldsymbol{\kappa}}^{-1}\mathbf{A})_j > 0, \\
                 \frac{a_{ij}}{(\mathbf{1}^T\mathbf{A}_{\boldsymbol{\kappa}}^{-1}\mathbf{A})_j-\epsilon}, &  \mbox{ for } 1 \leq i \leq n, j>n', (\mathbf{1}^T\mathbf{A}_{\boldsymbol{\kappa}}^{-1}\mathbf{A})_j < 0.
                 \end{cases}
\end{equation}
 Obviously, this leads to
 \begin{equation}\label{Eq:firsttransfo}P\left(\prod_{j=1}^mX_j^{a_{ij}}>c_ix, \; 1 \leq i \leq n\right)=P\left(\prod_{j=1}^m\hat{X}_j^{\hat{a}_{ij}}>c_ix, 1 \leq i \leq n\right). \end{equation}
In order to apply Proposition \ref{Pr:mainprop} we will distinguish between events where $\hat{X}_j\geq 1$ and those where $\hat{X}_j<1$ for $n< j \leq n'$. Therefore, for $J \subset \{n+1, \ldots, n'\}$ denote the event $\{\hat{X}_j \geq 1 \mbox{ for } j \in \{n+1, \ldots, n'\} \setminus J, \hat{X}_j < 1 \mbox{ for } j \in J\}$ by $B(J)$, and write
\begin{eqnarray}
\nonumber && P\left(\prod_{j=1}^m \hat{X}_j^{\hat{a}_{ij}}>c_i x, \; 1 \leq i \leq n\right)\\
\label{Eq:condsummands}&=& \sum_{J \subset \{n+1, \ldots, n'\}} P\left(\prod_{j=1}^m \hat{X}_j^{\hat{a}_{ij}}>c_ix, \; 1 \leq i \leq n \, \Bigg| \, B(J)\right)P(B(J)).
\end{eqnarray}
For $J$ with $P(B(J))>0$ define now independent random variables $\hat{X}_j^{(J)}, 1 \leq j \leq m$, with
\begin{equation}\label{Eq:DefXJtilde} P^{\hat{X}_j^{(J)}}=\begin{cases}
                        P^{\hat{X}_j}, & \mbox{ for } j \in \{1, \ldots, n, n'+1, \ldots, m\}, \\
                        P^{\hat{X}_j^{-1}|\hat{X}_j<1}, & \mbox{ for } j \in J, \\
                        P^{\hat{X}_j|\hat{X}_j\geq 1}, & \mbox{ for } j \in \{n+1, \ldots, n'\} \setminus J. \\
                        \end{cases}
\end{equation}
Furthermore, set $\hat{\mathbf{A}}^{(J)}$ with
\begin{equation}\label{Eq:DefAJtilde} \hat{a}_{ij}^{(J)}=\begin{cases}
                    \hat{a}_{ij}, & \mbox{ for } 1 \leq i \leq n, j \in \{1, \ldots, m\} \setminus J, \\
                    -\hat{a}_{ij}, & \mbox{ for } 1 \leq i \leq n, j \in J.
                    \end{cases}
\end{equation}
By independence of the $\hat{X}_j$'s and of the $\hat{X}_j^{(J)}$'s, the first factor of each summand in \eqref{Eq:condsummands} is equal to
\begin{equation}\label{Eq:probwithX^{(J)}} P\left(\prod_{j=1}^m \left(\hat{X}_j^{(J)}\right)^{\hat{a}^{(J)}_{ij}}>c_i x, \; 1 \leq i \leq n\right)
\end{equation}
for all $J\subset \{n+1, \ldots, n'\}$ with $P(B(J))>0$. The vector $\boldsymbol{\kappa}$ is a basic feasible solution to
\begin{equation}\label{Eq:LO(J)} \mbox{find } \mathbf{x} \geq \mathbf{0} \; \mbox{such that } \hat{\mathbf{A}}^{(J)} \mathbf{x} \geq \mathbf{1}, \;\;\; \sum_{i=1}^mx_i \to \min!,
\end{equation}
because $\hat{\mathbf{A}}^{(J)}\boldsymbol{\kappa}=\mathbf{1}$, as the first $n$ columns of $\hat{\mathbf{A}}^{(J)}$ are identical to those of $\mathbf{A}$. Furthermore,
\begin{equation}\label{Eq:alldualcases} (\mathbf{1}^T\mathbf{A}_{\boldsymbol{\kappa}}^{-1}\hat{\mathbf{A}}^{(J)})_j=
\begin{cases}
(\mathbf{1}^T\mathbf{A}_{\boldsymbol{\kappa}}^{-1}\mathbf{A})_j=1, & \mbox{ if } j \leq n, \\
\epsilon^{-1}(\mathbf{1}^T\mathbf{A}_{\boldsymbol{\kappa}}^{-1}\mathbf{A})_j=0, & \mbox{ if } j \in \{n+1, \ldots, n'\}\setminus J\\
\epsilon^{-1}(\mathbf{1}^T\mathbf{A}_{\boldsymbol{\kappa}}^{-1}(-\mathbf{A}))_j=0, & \mbox{ if } j \in J\\
\frac{(\mathbf{1}^T\mathbf{A}_{\boldsymbol{\kappa}}^{-1}
\mathbf{A})_j}{(\mathbf{1}^T\mathbf{A}_{\boldsymbol{\kappa}}^{-1}\mathbf{A})_j+\epsilon}\in (0,1), & \mbox{ if } j>n', (\mathbf{1}^T\mathbf{A}_{\boldsymbol{\kappa}}^{-1}\mathbf{A})_j> 0, \\
\frac{(\mathbf{1}\mathbf{A}_{\boldsymbol{\kappa}}^{-1}
\mathbf{A})_j}{(\mathbf{1}\mathbf{A}_{\boldsymbol{\kappa}}^{-1}\mathbf{A})_j-\epsilon} \in (0,1), & \mbox{ if } j>n', (\mathbf{1}^T\mathbf{A}_{\boldsymbol{\kappa}}^{-1}\mathbf{A})_j< 0,
                                                                          \end{cases}
\end{equation}                                                                          
which proves that $\boldsymbol{\kappa}$ is the unique optimal solution to \eqref{Eq:LO(J)}, because $1-(\mathbf{1}^T\mathbf{A}_{\boldsymbol{\kappa}}^{-1}\hat{\mathbf{A}}^{(J)})_j$ is strictly positive for all non-basic variables $n<j\leq m$ (cf.\ the analogue of Theorem 1.6 and the remark after the proof of this theorem in \cite{Si96} for a linear minimization problem instead of a maximization problem).

Let now $J \subset \{n+1, \ldots, n'\}$ and let $\tilde{\hat{\mathbf{A}}}^{(J)}$ be the matrix described in Lemma \ref{Lem:makepositive} (c), corresponding to $\hat{\mathbf{A}}^{(J)}$ with $\tilde{\hat{a}}_{ij}^{(J)}>0$ for all $1 \leq i \leq n, j \in \{1, \ldots, n, n'+1, \ldots, m\}$. Then, for $c>0$,
\begin{align}
\nonumber & P\left(\prod_{j=1}^m \left(\hat{X}_j^{(J)}\right)^{\hat{a}^{(J)}_{ij}}>c_i x, \; 1 \leq i \leq n\right)\\
\nonumber = & P\left(\prod_{j=1}^m \left(\hat{X}_j^{(J)}\right)^{\hat{a}^{(J)}_{ij}}>c_i x, \; 1 \leq i \leq n, \hat{X}_j^{(J)} \geq c, n'< j \leq m\right) \\
\label{Eq:twoXhatsummands} + & \, P\left(\prod_{j=1}^m \left(\hat{X}_j^{(J)}\right)^{\hat{a}^{(J)}_{ij}}>c_i x, \; 1 \leq i \leq n,  \exists \; n'< j \leq m:\hat{X}_j^{(J)} < c \right)
\end{align}
and for the second summand we have
\begin{eqnarray*}
 && P\left(\prod\limits_{j=1}^m \left(\hat{X}_j^{(J)}\right)^{\hat{a}^{(J)}_{ij}}>c_i x, \; 1 \leq i \leq n, \exists \; n'< j \leq m:\hat{X}_j^{(J)} < c\right)\\
 &\leq & \sum_{\emptyset \neq K \subset \{n'+1, \ldots, m\}} P\Bigg(\prod\limits_{j=1}^m \left(\hat{X}_j^{(J)}\right)^{\tilde{\hat{a}}^{(J)}_{ij}}>x \min\limits_{1 \leq k \leq n}c_k, \\
 && \hspace{3cm} \hat{X}_j^{(J)}<c, j \in K, \hat{X}_j^{(J)}\geq c, j \in \{n'+1, \ldots, m\} \setminus K\Bigg)\\
 &\leq & \sum_{\emptyset \neq K \subset \{n'+1, \ldots, m\}}P\Bigg(\prod\limits_{j \in \{1, \ldots, n'\}} \left(\hat{X}_j^{(J)}\right)^{\tilde{\hat{a}}^{(J)}_{ij}}\prod\limits_{j \in \{n'+1, \ldots, m\} \setminus K} \max\left(1,\hat{X}_j^{(J)}\right)^{\tilde{\hat{a}}^{(J)}_{ij}}\\
 && > x  \min\limits_{1 \leq k \leq n}c_k\left( \min\limits_{1 \leq i \leq n} c^{-\sum\limits_{j \in K}\tilde{\hat{a}}_{ij}^{(J)}}\right), \; 1 \leq i \leq n\; \Bigg).
\end{eqnarray*}
Set
\begin{equation*} D(c)=D(c,c_1, \ldots, c_n):=\left(\min\limits_{1 \leq k \leq n}c_k\right)\left( \min\limits_{1 \leq i \leq n} c^{-\sum\limits_{j \in K}\tilde{\hat{a}}_{ij}^{(J)}}\right)>0\end{equation*}
for abbreviation and note that by our assumptions and Lemma \ref{Lem:makepositive} (c) $\boldsymbol{\kappa}$ is the unique optimal solution to
\begin{equation*} \mbox{find } \mathbf{x} \geq \mathbf{0} \; \mbox{such that } \tilde{\hat{\mathbf{A}}}^{(J)} \mathbf{x} \geq \mathbf{1}, \;\;\; \sum_{i=1}^mx_i \to \min!,\end{equation*}
with $\tilde{\hat{\mathbf{A}}}^{(J)}\boldsymbol{\kappa}=\mathbf{1}$, that $P^{\hat{X}_j^{(J)}}=P^{X_j}, 1 \leq j \leq n$, that $\hat{X}_{n+1}^{(J)}, \ldots, \hat{X}_{n'}^{(J)}\geq 1$ a.s. with $E(\hat{X}_j^{(J)})<\infty, n<j \leq n',$ and that $E(\max(1,\hat{X}_j^{(J)}))<\infty, n'<j \leq m$. Therefore, apply Proposition \ref{Pr:mainprop} to obtain for $\emptyset \neq K \subset \{n'+1, \ldots, m\}$
\begin{eqnarray*}
&& \frac{P\Big(\prod\limits_{j \in \{1, \ldots, n'\}} \left(\hat{X}_j^{(J)}\right)^{\tilde{\hat{a}}^{(J)}_{ij}}\prod\limits_{j \in \{n'+1, \ldots, m\} \setminus K} \max\left(1,\hat{X}_j^{(J)}\right)^{\tilde{\hat{a}}^{(J)}_{ij}}>D(c) x, \; 1 \leq i \leq n\Big)}{\prod_{j=1}^nP(X_j>x^{\kappa_j})} \\
& \to & (D(c))^{-\sum_{j=1}^n\kappa_j} D(J),
\end{eqnarray*}
as $x \to \infty$ for some finite constant $D(J)$ which does not depend on $c$. As $D(c) \to \infty$ for $c \searrow 0$ we conclude from \eqref{Eq:twoXhatsummands} that
\begin{align}
\nonumber & \lim_{x \to \infty} \frac{P\left(\prod\limits_{j=1}^m \left(\hat{X}_j^{(J)}\right)^{\hat{a}^{(J)}_{ij}}>c_i x, \; 1 \leq i \leq n\right)}{\prod_{j=1}^nP(X_j>x^{\kappa_j})}\\
\label{Eq:limitctozero}&= \lim_{c \searrow 0}\lim_{x \to \infty} \frac{P\left(\prod\limits_{j=1}^m \left(\hat{X}_j^{(J)}\right)^{\hat{a}^{(J)}_{ij}}>c_i x, \; 1 \leq i \leq n, \hat{X}_j^{(J)} \geq c, n'< j \leq m\right)}{\prod_{j=1}^nP(X_j>x^{\kappa_j})}
\end{align}
Let now
\begin{equation*} c_i(c)=c_i c^{-\sum_{j={n'+1}}^m \hat{a}_{ij}^{(J)}}=c_i c^{-\sum_{j={n'+1}}^m \hat{a}_{ij}}, \;\;\; 1 \leq i \leq n. \end{equation*}
Then
\begin{eqnarray}
\nonumber && \lim_{x \to \infty} \frac{P\left(\prod\limits_{j=1}^m \left(\hat{X}_j^{(J)}\right)^{\hat{a}^{(J)}_{ij}}>c_i x, \; 1 \leq i \leq n, \hat{X}_j^{(J)} \geq c, n'< j \leq m\right)}{\prod_{j=1}^nP(X_j>x^{\kappa_j})} \\
\label{Eq:limitwithcs} &=& \lim_{x \to \infty} \prod_{j=n'+1}^m P(\hat{X}_j^{(J)} \geq c)\left(\prod_{j=1}^n P(X_j>x^{\kappa_j})\right)^{-1}\\
\nonumber && P\left(\prod\limits_{j=1}^{n'} \left(\hat{X}_j^{(J)}\right)^{\hat{a}^{(J)}_{ij}}\prod\limits_{j=n'+1}^{m} \left(\frac{\hat{X}_j^{(J)}}{c}\right)^{\hat{a}^{(J)}_{ij}}>c_i(c) x, \; 1 \leq i \leq n \Bigg| \min\limits_{n'<j\leq m} \hat{X}_j^{(J)} \geq c \right).
\end{eqnarray}
The last factor in the above expression can be written as
\begin{equation*} P\left(\prod\limits_{j=1}^{m} \left(\hat{X}_j^{(J,c)}\right)^{\hat{a}^{(J)}_{ij}}>c_i(c) x, \; 1 \leq i \leq n\right) \end{equation*}
where $\hat{X}_j^{(J,c)}, 1 \leq j \leq m,$ denote independent random variables with
\begin{equation*} P^{\hat{X}_j^{(J,c)}}=\begin{cases}
                                   P^{\hat{X}_j^{(J)}}, & \mbox{ for } 1 \leq j \leq n', \\
                                   P^{c^{-1}\hat{X}_j^{(J)}|\tilde{X}_j^{(J)}\geq c}, & \mbox{ for }  n'< j \leq m.
                                   \end{cases} \end{equation*}
Set
\begin{equation*} \hat{c}_j(c)=\begin{cases}
\prod_{i=1}^n c_i(c)^{(\mathbf{A}_{\boldsymbol{\kappa}}^{-1})_{ji}}, & 1 \leq j \leq n, \\
1, & n<j \leq m,
\end{cases}\end{equation*}
which implies that
\begin{equation*} \prod_{j=1}^m \hat{c}_j(c)^{\hat{a}_{ij}^{(J)}}=\prod_{j=1}^m \hat{c}_j(c)^{a_{ij}}=\exp\left(\sum_{j=1}^n a_{ij}\sum_{k=1}^n(\mathbf{A}_{\boldsymbol{\kappa}}^{-1})_{jk} \ln(c_k(c))\right)=c_i(c), \;\; 1 \leq i \leq n.\end{equation*}
Then we have
\begin{eqnarray}\label{Eq:limitstep1} && \frac{P\left(\prod\limits_{j=1}^m \left(\hat{X}_j^{(J,c)}\right)^{\hat{a}_{ij}^{(J)}}>c_i(c) x, \;1 \leq i \leq n\right)}{\prod_{j=1}^n P(X_j>x^{\kappa_j})}\\
\label{Eq:twocfactors} &=& \frac{P\left(\prod\limits_{j=1}^m \left(\frac{\hat{X}_j^{(J,c)}}{\hat{c}_j(c)}\right)^{\hat{a}_{ij}^{(J)}}>x, \;1 \leq i \leq n\right)}{\prod_{j=1}^n P(X_j>\hat{c}_j(c)x^{\kappa_j})} \cdot \frac{\prod_{j=1}^n P(X_j>\hat{c}_j(c)x^{\kappa_j})}{\prod_{j=1}^n P(X_j>x^{\kappa_j})}.
\end{eqnarray}
As $x \to \infty$, in view of \eqref{Eq:identitykappahat}, the second factor of \eqref{Eq:twocfactors} converges to
\begin{eqnarray}&&\nonumber \prod_{j=1}^n (\hat{c}_j(c))^{-1}=\prod_{j=1}^n\prod_{i=1}^n (c_i(c))^{-(\mathbf{A}_{\boldsymbol{\kappa}}^{-1})_{ji}}=\prod_{i=1}^n (c_i(c))^{-((\mathbf{A}_{\boldsymbol{\kappa}}^{-1})^T\mathbf{1})_i}=\prod_{i=1}^n (c_i(c))^{-\hat{\kappa}_i}\\
\label{Eq:limitofcs} &=&\prod_{i=1}^n\left(c_i^{-\hat{\kappa}_i}c^{\hat{\kappa}_i \sum_{j=n'+1}^m\hat{a}_{ij}^{(J)}}\right)=c^{\sum_{i=1}^n\sum_{j=n'+1}^m \hat{a}_{ij}^{(J)}\hat{\kappa}_i}\prod_{i=1}^n c_i^{-\hat{\kappa}_i}.
\end{eqnarray}

Note that $P^{\hat{X}_j^{(J,c)}}=P^{X_j}$ for $1 \leq j \leq n$ and \begin{equation*}\frac{\hat{X}_j^{(J,c)}}{\hat{c}_j(c)}=\hat{X}_j^{(J,c)}\geq 1 \mbox{ a.s.}, \;\;  E\left(\frac{\hat{X}_j^{(J,c)}}{\hat{c}_j(c)}\right)<\infty \mbox{ for } n<j\leq m.\end{equation*}
Hence, we can apply Proposition \ref{Pr:mainprop} to see that the first factor of \eqref{Eq:twocfactors} converges to
\begin{equation*} \int_{M(\hat{\mathbf{A}}^{(J)})} \prod_{j=1}^nx_j^{-2}\lebesgue(\mbox{d}(x_j)_{1 \leq j \leq n}) \otimes P^{(\hat{X}_j^{(J,c)})_{n< j \leq m}}(\mbox{d}(x_j)_{n< j \leq m})\end{equation*}
Write $\mathbf{x}_1=(x_1, \ldots, x_n), \mathbf{x}_2=(x_{n+1}, \ldots, x_m)$ for abbreviation and use the substition $\mathbf{y}=\ln(\mathbf{x}_1):=(\ln(x_1), \ldots, \ln(x_n))$ to see that the above expression equals
\begin{align}
\nonumber & \int\limits_{[0,\infty)^{m-n}}\int\limits_{\left\{\mathbf{y} \in \mathbb{R}^n:\exp(\mathbf{A}_{\boldsymbol{\kappa}}\mathbf{y})>(\prod_{j=n+1}^m x_j^{-\hat{a}^{(J)}_{ij}})_{1 \leq i \leq n}\right\}} \hspace{-1cm} \exp(-\sum_{i=1}^n y_i)\lebesgue(\mbox{d}\mathbf{y})P^{(\hat{X}_j^{(J,c)})_{n< j \leq m}}(\mbox{d}\mathbf{x}_2)\\
\nonumber &= \int\limits_{[0,\infty)^{m-n}}\int\limits_{\left\{\mathbf{z} \in \mathbb{R}^n:\exp(\mathbf{z})>(\prod_{j=n+1}^m x_j^{-\hat{a}^{(J)}_{ij}})_{1 \leq i \leq n}\right\}}|\det(\mathbf{A}_{\boldsymbol{\kappa}})|^{-1} \\
\nonumber & \hspace{1cm}\exp\left(-\sum_{k=1}^n\sum_{l=1}^n
(\mathbf{A}_{\boldsymbol{\kappa}}^{-1})_{kl}z_l\right)\lebesgue(\mbox{d}\mathbf{z})\, P^{(\hat{X}_j^{(J,c)})_{n< j \leq m}}(\mbox{d}\mathbf{x}_2) \\
\nonumber &= |\det(\mathbf{A}_{\boldsymbol{\kappa}})|^{-1}\int\limits_{[0,\infty)^{m-n}}\prod_{l=1}^n \int\limits_{(\ln(\prod_{j=n+1}^m x_j^{-\hat{a}^{(J)}_{lj}}),\infty)} \hspace{-1cm} \exp\left(-z_l\sum_{k=1}^n (\mathbf{A}_{\boldsymbol{\kappa}}^{-1})_{kl}\right)\mbox{d}z_l P^{(\hat{X}^{(J,c)}_j)_{n< j \leq m}}(\mbox{d}\mathbf{x}_2) \\
\nonumber &= \frac{|\det(\mathbf{A}_{\boldsymbol{\kappa}})|^{-1}}{\prod_{i=1}^n
((\mathbf{A}_{\boldsymbol{\kappa}}^{-1})^T\mathbf{1})_i}\int_{[0,\infty)^{m-n}}\prod_{l=1}^n\prod_{j=n+1}^m x_j^{\hat{a}_{lj}^{(J)}\sum_{k=1}^n (\mathbf{A}_{\boldsymbol{\kappa}}^{-1})_{kl}}P^{(\hat{X}^{(J,c)}_j)_{n< j \leq m}}(\mbox{d}\mathbf{x}_2) \\
\nonumber &= \frac{|\det(\mathbf{A}_{\boldsymbol{\kappa}})|^{-1}}{\prod_{i=1}^n \hat{\kappa}_i}\prod_{j=n+1}^m E\left(\left(\hat{X}_j^{(J,c)}\right)^{\sum_{l=1}^n \hat{a}^{(J)}_{lj} ((\mathbf{A}_{\boldsymbol{\kappa}}^{-1})^T\mathbf{1})_l}\right)\\
\label{Eq:finalsecondfactor} &= \frac{|\det(\mathbf{A}_{\boldsymbol{\kappa}})|^{-1}}{\prod_{i=1}^n \hat{\kappa}_i}\prod_{j=n'+1}^m \left( E\left(\left(\hat{X}_j^{(J)}\right)^{\sum_{i=1}^n \hat{a}^{(J)}_{ij} \hat{\kappa}_i}\Big|\hat{X}_j^{(J)}\geq c \right)c^{-\sum_{i=1}^n \hat{a}^{(J)}_{ij} \hat{\kappa}_i}\right),
\end{align}
where we used in the final step that $\sum_{i=1}^n \hat{a}^{(J)}_{ij} \hat{\kappa}_i=(\mathbf{1}^T\mathbf{A}_{\boldsymbol{\kappa}}^{-1}\hat{\mathbf{A}}^{(J)})_j=0$ for $n<j \leq n'$, cf.\ \eqref{Eq:alldualcases}. Combine \eqref{Eq:limitofcs} and \eqref{Eq:finalsecondfactor} to see that the expression in  \eqref{Eq:twocfactors} converges to
\begin{equation}\label{Eq:limitforstep1} \frac{|\det(\mathbf{A}_{\boldsymbol{\kappa}})|^{-1}\prod_{i=1}^n c_i^{-\hat{\kappa}_i}}{\prod_{i=1}^n \hat{\kappa}_i}\prod_{j=n'+1}^m E\left(\left(\hat{X}_j^{(J)}\right)^{\sum_{i=1}^n \hat{a}^{(J)}_{ij} \hat{\kappa}_i}\Big|\hat{X}_j^{(J)}\geq c \right)
\end{equation}
as $x \to \infty$. Now, \eqref{Eq:limitforstep1} together with \eqref{Eq:limitctozero} and \eqref{Eq:limitwithcs} yields that
\begin{eqnarray*}
&& \lim_{x \to \infty} \frac{P\left(\prod\limits_{j=1}^m \left(\hat{X}_j^{(J)}\right)^{\hat{a}^{(J)}_{ij}}>c_i x, \; 1 \leq i \leq n\right)}{\prod_{j=1}^nP(X_j>x^{\kappa_j})} \\
&=& \lim_{c \searrow 0}\frac{|\det(\mathbf{A}_{\boldsymbol{\kappa}})|^{-1}\prod_{i=1}^n c_i^{-\hat{\kappa}_i}}{\prod_{i=1}^n \hat{\kappa}_i}\prod_{j=n'+1}^m E\left(\left(\hat{X}_j^{(J)}\right)^{\sum_{i=1}^n \hat{a}^{(J)}_{ij} \hat{\kappa}_i}\mathds{1}_{\{\hat{X}_j^{(J)}\geq c\}}\right)\\
&=& \frac{|\det(\mathbf{A}_{\boldsymbol{\kappa}})|^{-1}\prod_{i=1}^n c_i^{-\hat{\kappa}_i}}{\prod_{i=1}^n \hat{\kappa}_i}\prod_{j=n'+1}^m E\left(\left(\hat{X}_j^{(J)}\right)^{\sum_{i=1}^n \hat{a}^{(J)}_{ij} \hat{\kappa}_i}\right)\\
&=& \frac{|\det(\mathbf{A}_{\boldsymbol{\kappa}})|^{-1}\prod_{i=1}^n c_i^{-\hat{\kappa}_i}}{\prod_{i=1}^n \hat{\kappa}_i}\prod_{j=n'+1}^m E\left(X_j^{\sum_{i=1}^n a_{ij} \hat{\kappa}_i}\right),
\end{eqnarray*}
where we used $\sum_{i=1}^n \hat{a}^{(J)}_{ij} \hat{\kappa}_i=((\hat{\mathbf{A}}^{(J)})^T
(\mathbf{A}_{\boldsymbol{\kappa}}^{-1})^T\mathbf{1})_j=(\mathbf{1}^T
\mathbf{A}_{\boldsymbol{\kappa}}^{-1}\hat{\mathbf{A}}^{(J)})_j>0, n'<j \leq m$ (cf.\ \eqref{Eq:alldualcases}) in the penultimate equality and \eqref{Eq:DefXtilde}, \eqref{Eq:DefAtilde}, \eqref{Eq:DefXJtilde} and \eqref{Eq:DefAJtilde} in the final equality. This expression no longer depends on $J \subset \{n+1, \ldots n'\}$ and therefore \eqref{Eq:firsttransfo}, \eqref{Eq:condsummands} and \eqref{Eq:probwithX^{(J)}} lead to
\begin{eqnarray*}
 && \lim_{x \to \infty} \frac{P\left(\prod_{j=1}^m X_j^{a_{ij}}>c_i x, \; 1 \leq i \leq n\right)}{\prod_{j=1}^nP(X_j>x^{\kappa_j})}=\lim_{x \to \infty} \frac{P\left(\prod_{j=1}^m \hat{X}_j^{\hat{a}_{ij}}>c_i x, \; 1 \leq i \leq n\right)}{\prod_{j=1}^nP(X_j>x^{\kappa_j})}\\
 &=& \frac{|\det(\mathbf{A}_{\boldsymbol{\kappa}})|^{-1}\prod_{i=1}^n c_i^{-\hat{\kappa}_i}}{\prod_{i=1}^n \hat{\kappa}_i}\prod_{j=n'+1}^m E\left(X_j^{\sum_{i=1}^n a_{ij} \hat{\kappa}_i}\right) \\
 &=& |\det \mathbf{A}_{\boldsymbol{\kappa}}|^{-1}\frac{\prod_{i=1}^n c_i^{-(\mathbf{1}^T\mathbf{A}_{\boldsymbol{\kappa}}^{-1})_i}}{\prod_{i=1}^n (\mathbf{1}^T\mathbf{A}_{\boldsymbol{\kappa}}^{-1})_i}\prod_{j:\kappa_j=0} E\left(X_j^{ (\mathbf{1}^T\mathbf{A}_{\boldsymbol{\kappa}}^{-1}\mathbf{A})_j}\right),
\end{eqnarray*}
and so the limit in \eqref{Eq:Maintheorem1} equals the expression in \eqref{Eq:Maintheorem2}. By Theorems~2.4 and 2.5 of \cite{LiReRo14}, this shows that $c(x) P\left((x^{-1}\prod_{j=1}^m X_j^{a_{ij}})_{1 \leq i \leq n} \in \cdot \right)$, $ x>0$, with $c(x)=(\prod_{j=1}^nP(X_j>x^{\kappa_j}))^{-1}$, is relatively compact in $\mathbb{M}_{(0,\infty)^n}$. Furthermore, all accumulation points of this family agree on a generating $\pi$-system. Thus, $P^{(\prod_{j=1}^m X_j^{a_{ij}})_{1 \leq i \leq n}}$ is regularly varying on $(0,\infty)^n$ w.r.t.\ scalar multiplication, cf.\ Example~\ref{ex:LT}. The index of regular variation follows from Lemma and Definition~\ref{LemDef:index} since $c$ is regularly varying with index $-\sum_{j=1}^n\kappa_j=-\sum_{j=1}^m\kappa_j=-\mathbf{1}^T\mathbf{A}_{\boldsymbol{\kappa}}^{-1}\mathbf{1}$, cf.\ \eqref{Eq:optval}.
\end{proof}

\subsection{Auxiliary results}\label{Sec:auxresults}
In the following, we collect two lemmas and a proposition which are needed for the proofs in Sections \ref{Sec:boundsproof} and \ref{Sec:proof}.
\begin{lemma}\label{Lem:makepositive} Let $\boldsymbol{\kappa}=(\kappa_1, \ldots, \kappa_m)^T$ be an optimal solution to \eqref{Eq:LOP}.
\begin{itemize}
\item[(a)] There exists a matrix $\tilde{\mathbf{A}}=(\tilde{a}_{ij}) \in \mathbb{R}^{n \times m}$ such that
\begin{itemize}
\item the columns $j$ in $\tilde{\mathbf{A}}$ for which $\kappa_j>0$ have all positive entries,
\item $\boldsymbol{\kappa}$ is an optimal solution to the linear program
\begin{equation}\label{Eq:posLP}  \mbox{find } \mathbf{x} \geq \mathbf{0} \; \mbox{such that } \tilde{\mathbf{A}} \mathbf{x} \geq \mathbf{1}, \;\;\; \sum_{i=1}^mx_i \to \min!
\end{equation}
\item for all $x, x_1, x_2, \ldots, x_m\geq 0$,
\begin{equation}
\label{Eq:Upperprobbound}\prod_{j=1}^m x_j^{a_{ij}}>x, \;1 \leq i \leq n \; \Rightarrow  \; \prod_{j=1}^m x_j^{\tilde{a}_{ij}}>x, \;1 \leq i \leq n.
\end{equation}
\end{itemize}
\item[(b)] Moreover, if the assumptions of Proposition \ref{Pr:mainprop} hold, then the matrix $\tilde{\mathbf{A}}$ can be chosen such that additionally
\begin{itemize}
\item $\boldsymbol{\kappa}$ is the \emph{unique} optimal solution to the linear program \eqref{Eq:posLP},
\item $\tilde{\mathbf{A}}\boldsymbol{\kappa}=\mathbf{1}.$
\end{itemize}
\item[(c)] If the assumptions of Theorem \ref{Th:maintheorem} hold, then there exists a matrix $\tilde{\mathbf{A}}=(\tilde{a}_{ij}) \in \mathbb{R}^{n \times m}$ such that
\begin{itemize}
\item the columns $j$ in $\tilde{\mathbf{A}}$ for which $(\mathbf{1}^T\mathbf{A}_{\boldsymbol{\kappa}}^{-1}\mathbf{A})_j>0$ have all positive entries,
\item $\boldsymbol{\kappa}$ is the unique optimal solution to the linear program \eqref{Eq:posLP},
\item $\tilde{\mathbf{A}}\boldsymbol{\kappa}=\mathbf{1}$,
\item for all $x, x_1, x_2, \ldots, x_m\geq 0$ \eqref{Eq:Upperprobbound} holds.
\end{itemize}
\end{itemize}
\end{lemma}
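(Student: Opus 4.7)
The central tool will be linear-programming duality for \eqref{Eq:LOP}. The dual problem is to maximize $\mathbf{1}^T\mathbf{y}$ subject to $\mathbf{A}^T\mathbf{y}\le \mathbf{1}$, $\mathbf{y}\ge \mathbf{0}$; let $\hat{\boldsymbol{\kappa}}$ be any optimal dual solution. Strong duality gives $S:=\mathbf{1}^T\hat{\boldsymbol{\kappa}}=\mathbf{1}^T\boldsymbol{\kappa}$, and primal complementary slackness gives $(\mathbf{A}^T\hat{\boldsymbol{\kappa}})_j=1$ whenever $\kappa_j>0$. Set $\boldsymbol{\mu}:=\hat{\boldsymbol{\kappa}}/S$ and $\bar{\mathbf{a}}^T:=\boldsymbol{\mu}^T\mathbf{A}$; then $\bar{a}_j=1/S$ for every $j$ with $\kappa_j>0$ and $\bar{a}_j\le 1/S$ in general, and $\bar{\mathbf{a}}^T\boldsymbol{\kappa}=1$. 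Under the assumptions of Theorem \ref{Th:maintheorem}, $\hat{\boldsymbol{\kappa}}^T=\mathbf{1}^T\mathbf{A}_{\boldsymbol{\kappa}}^{-1}$, so $\{j:\bar{a}_j>0\}$ coincides with the column set $\{j:(\mathbf{1}^T\mathbf{A}_{\boldsymbol{\kappa}}^{-1}\mathbf{A})_j>0\}$ appearing in (c).

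For all three parts I propose the single construction $\tilde{\mathbf{A}}=\tilde{\mathbf{A}}(c):=(1-c)\mathbf{A}+c\,\mathbf{1}\bar{\mathbf{a}}^T$ with a suitable $c\in(0,1)$. Since $\boldsymbol{\mu}$ is a probability vector, each row $\tilde{\mathbf{a}}_i=((1-c)+c\mu_i)\mathbf{a}_i+\sum_{k\ne i}c\mu_k\mathbf{a}_k$ is a convex combination of the rows of $\mathbf{A}$. After taking logarithms on $(0,\infty)^m$, the implication \eqref{Eq:Upperprobbound} then follows by averaging the inequalities $\mathbf{a}_k^T(\log x_j)_j>\log x$ with non-negative weights summing to $1$. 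The boundary cases where some $x_j=0$ are handled by the observation that, for each row $k$, the hypothesis $\prod_j x_j^{a_{kj}}>x$ with $x>0$ forces $a_{kj}\le 0$ at every index with $x_j=0$; combined with the conventions $0^0=1$, $\infty\cdot 0=0$, this yields $\tilde{a}_{ij}\le 0$ at those indices and a straightforward case analysis finishes the product inequality. Positivity of $\tilde{a}_{ij}=(1-c)a_{ij}+c\bar{a}_j$ in the ``relevant'' columns (those with $\kappa_j>0$ for (a) and (b), those with $\bar{a}_j>0$ for (c)) reduces to the inequality $c>|a_{ij}|/(\bar{a}_j+|a_{ij}|)$ whenever $a_{ij}<0$; the right-hand side is strictly below $1$, so any $c$ sufficiently close to $1$ works.

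The reduction that delivers optimality of $\boldsymbol{\kappa}$ for $\tilde{\mathbf{A}}(c)$ --- and, when the original $\boldsymbol{\kappa}$ is unique, also uniqueness --- is the following key implication: if $\mathbf{y}\ge\mathbf{0}$, $\tilde{\mathbf{A}}(c)\mathbf{y}\ge\mathbf{1}$ and $\mathbf{1}^T\mathbf{y}\le S$, then $\mathbf{A}\mathbf{y}\ge\mathbf{1}$. Indeed, $\bar{a}_j\le 1/S$ yields $\bar{\mathbf{a}}^T\mathbf{y}\le S^{-1}\mathbf{1}^T\mathbf{y}\le 1$, hence
\begin{equation*}
(1-c)\mathbf{A}\mathbf{y}\;=\;\tilde{\mathbf{A}}(c)\mathbf{y}-c(\bar{\mathbf{a}}^T\mathbf{y})\mathbf{1}\;\ge\;\mathbf{1}-c\mathbf{1}\;=\;(1-c)\mathbf{1}.
\end{equation*}
Thus any candidate that ties or beats $\boldsymbol{\kappa}$ under $\tilde{\mathbf{A}}(c)$ is also feasible for $\mathbf{A}$, and optimality (respectively uniqueness) transfers directly from the original LP. Feasibility of $\boldsymbol{\kappa}$ itself is immediate from $\tilde{\mathbf{A}}(c)\boldsymbol{\kappa}=(1-c)\mathbf{A}\boldsymbol{\kappa}+c\mathbf{1}\ge\mathbf{1}$, and this expression equals $\mathbf{1}$ precisely when $\mathbf{A}\boldsymbol{\kappa}=\mathbf{1}$, which is the extra assumption invoked in (b) and (c).

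The substantive ingredients are the LP-duality construction and the reduction lemma above; I expect the main annoyance to be purely bookkeeping, namely verifying \eqref{Eq:Upperprobbound} on the boundary of $[0,\infty)^m$ under the extended-real arithmetic conventions and tracking that the correct column set is paired with the correct threshold for $c$ in each of the three statements. Once these details are organized, the three parts share a common proof with only the choice of $c$ and of the ``relevant'' column set differing.
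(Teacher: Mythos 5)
Your construction $\tilde{\mathbf{A}}(c)=(1-c)\mathbf{A}+c\,\mathbf{1}\bar{\mathbf{a}}^T$ is the paper's matrix $\tilde{a}_{ij}=\bigl(a_{ij}+a_{\min}\sum_k a_{kj}\hat{\kappa}_k\bigr)/\bigl(1+a_{\min}\sum_k\kappa_k\bigr)$ in a different parametrization (take $c=a_{\min}S/(1+a_{\min}S)$), and each step --- complementary slackness giving positivity in the relevant columns, the feasibility-transfer implication giving optimality and uniqueness, and averaging the row inequalities to obtain \eqref{Eq:Upperprobbound} --- coincides with the paper's argument. The proposal is correct and essentially the same proof.
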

\begin{proof} First note that if $a_{ij}>0$ for all $1 \leq i \leq n$ and all $j$ such that $\kappa_j>0$ (cases (a) and (b)) or $(\mathbf{1}^T\mathbf{A}^{-1}_{\boldsymbol{\kappa}}\mathbf{A})_j>0$ (case (c)), then we may simply set $\tilde{\mathbf{A}}=\mathbf{A}$. So, assume the contrary in the following. Set $J:=\{j \in \{1, \ldots, m\}:\kappa_j>0\}$.
Since we have assumed an optimal solution $\boldsymbol{\kappa}$ to \eqref{Eq:LOP}, there also exists an optimal (not necessarily unique) solution $\hat{\boldsymbol{\kappa}}=(\hat{\kappa}_1, \ldots, \hat{\kappa}_n)^T$ to the dual problem \eqref{Eq:DualLO} and this solution satisfies $\sum_{i=1}^n\hat{\kappa}_i=\sum_{j=1}^m\kappa_j$, cf.\ Theorem 2.2 in \cite{Si96}. Furthermore, by the Complementary Slackness Theorem (cf.\ \cite{Si96}, Theorem 2.4) we have $(\mathbf{A}^T\hat{\boldsymbol{\kappa}})_j=1$ for all $j \in J$.
For assertions (a) and (b) let $a_{\min}:=-(\min_{1\leq i \leq n, j \in J} a_{ij})+\epsilon$ for some $\epsilon>0$. By our assumptions, $a_{\min}$ is positive. Define
\begin{equation*} \tilde{\mathbf{A}}=(\tilde{a}_{ij})_{1\leq i \leq n, 1 \leq j \leq m} \;\; \mbox{with} \;\; \tilde{a}_{ij}=\frac{a_{ij}+a_{\min}\sum_{k=1}^n a_{kj}\hat{\kappa}_k}{1+a_{\min}\sum_{k=1}^m\kappa_k}.\end{equation*}
As seen above, we have $\sum_{k=1}^na_{kj}\hat{\kappa}_k=1$ and thus $\tilde{a}_{ij}>0$ for $j \in J$ and all $1\leq i\leq n$.

Note that
\begin{eqnarray}
\nonumber \tilde{\mathbf{A}}\boldsymbol{\kappa}&=& \left(1+a_{\min}\sum_{i=1}^m\kappa_i\right)^{-1}\left((a_{ij}+a_{\min})_{1 \leq i \leq n, j \in J}\right)((\kappa_j)_{j \in J})^T \\
\label{Eq:kappafeasiblefortilde} &\geq & \left(1+a_{\min}\sum_{i=1}^m\kappa_i\right)^{-1}\left(\mathbf{1}+a_{\min}\sum_{i=1}^m\kappa_i \mathbf{1}\right)=\mathbf{1},
\end{eqnarray}
so $\boldsymbol{\kappa}$ is a feasible solution to \eqref{Eq:posLP}. Furthermore, if there would exist a $\boldsymbol{\kappa}'\geq \mathbf{0}$ with $\tilde{\mathbf{A}}\boldsymbol{\kappa}'\geq \mathbf{1}$ and $\sum_{i=1}^m\kappa_i'<\sum_{i=1}^m\kappa_i$, then
\begin{equation}\label{Eq:implicationkappaprime} \sum_{j=1}^m\left(a_{ij}+a_{\min}\sum_{k=1}^na_{kj}\hat{\kappa}_k\right)\kappa_j'  \geq  1+a_{\min}\sum_{k=1}^m\kappa_k, \;\;\; 1 \leq i \leq n,
\end{equation}
and thus
\begin{eqnarray}
\nonumber \sum_{j=1}^m a_{ij}\kappa_j' &\geq & 1+a_{\min}\sum_{k=1}^m\kappa_k-a_{\min}\sum_{k=1}^n\sum_{j=1}^m a_{kj}\hat{\kappa}_k\kappa_j' \\
\nonumber &\geq & 1+a_{\min}\left(\sum_{k=1}^m\kappa_k-\sum_{j=1}^m \kappa_j'\right) \\
\label{Eq:kappaprimealsofeasible} &\geq& 1, \;\;\; 1 \leq i \leq n,
\end{eqnarray}
where we used in the penultimate inequality that $\sum_{k=1}^n a_{kj}\hat{\kappa}_k\leq 1$ and $\kappa_j'\geq 0, 1 \leq j \leq m$. But this implies that $\boldsymbol{\kappa}'$ with $\sum_{i=1}^m\kappa_i'<\sum_{i=1}^m\kappa_i$ would also be a feasible solution to \eqref{Eq:LOP}, in contrast to the assumption about the optimality of $\boldsymbol{\kappa}$. Thus, $\boldsymbol{\kappa}$ is also an optimal solution to \eqref{Eq:posLP}.

We are thus left to show \eqref{Eq:Upperprobbound} for the proof of (a). For $x, x_1, x_2, \ldots, x_m \geq 0$ such that $\prod_{j=1}^m x_j^{a_{ij}}>x, 1 \leq i \leq n,$ we have
\begin{equation*} \prod_{j=1}^m x_j^{a_{ij}a_{\min}\hat{\kappa}_i}\geq x^{a_{\min}\hat{\kappa}_i},  \;\;\; 1 \leq i \leq n, \end{equation*}
with strict inequality if $\hat{\kappa}_i>0$, which must be the case for at least one $1 \leq i \leq n$. So by multiplication of left hand sides and right hand sides we obtain
\begin{align*}
&&\left(\prod_{j=1}^m x_j^{a_{ij}}\right)\prod_{k=1}^n\prod_{j=1}^m x_j^{a_{kj}a_{\min}\hat{\kappa}_k}&>x \prod_{k=1}^nx^{a_{\min} \hat{\kappa}_k}=x^{1+a_{\min}\sum_{k=1}^n\hat{\kappa}_k}, \;\; 1 \leq i \leq n \\
&\Leftrightarrow & \prod_{j=1}^mx_j^{a_{ij}+a_{\min}\sum_{k=1}^na_{kj}\hat{\kappa}_k}&>x^{1+a_{\min}\sum_{k=1}^n\hat{\kappa}_k}, \;\; 1 \leq i \leq n\\
&\Leftrightarrow & \prod_{j=1}^mx_j^{\frac{a_{ij}+a_{\min}\sum_{k=1}^na_{kj}\hat{\kappa}_k}{1+a_{\min}\sum_{k=1}^n\hat{\kappa}_k}}&>x, \;\; 1 \leq i \leq n\\
&\Leftrightarrow & \prod_{j=1}^mx_j^{\tilde{a}_{ij}}&>x, \;\; 1 \leq i \leq n.
\end{align*}
Thus, \eqref{Eq:Upperprobbound} holds.

For the proof of (b), we use that the additional assumption implies that $\boldsymbol{\kappa}$ is the \emph{unique} optimal solution to \eqref{Eq:LOP} and that $\mathbf{A}\boldsymbol{\kappa}=\mathbf{1}$. Similar to \eqref{Eq:kappafeasiblefortilde} one shows that $\tilde{\mathbf{A}}\boldsymbol{\kappa}=\mathbf{1}$. Furthermore, if there would exist a $\boldsymbol{\kappa}'\neq \boldsymbol{\kappa}$ with $\tilde{\mathbf{A}}\boldsymbol{\kappa}'\geq \mathbf{1}$ and $\sum_{j=1}^m\kappa_j'\leq \sum_{j=1}^m\kappa_j$ then one shows analogously to \eqref{Eq:kappaprimealsofeasible} that this would imply that the optimal solution $\boldsymbol{\kappa}$ to \eqref{Eq:LOP} is not unique. This shows that $\boldsymbol{\kappa}$ is the unique optimal solution to \eqref{Eq:posLP} and proves (b).

For the proof of (c), we use that the additional assumption implies that $\boldsymbol{\kappa}$ is the \emph{unique} optimal solution to \eqref{Eq:LOP} and that $\hat{\boldsymbol{\kappa}}=(\mathbf{A}_{\boldsymbol{\kappa}}^{-1})^T\mathbf{1}$ is the unique solution to \eqref{Eq:DualLO}, cf.\ the beginning of the proof of Theorem \ref{Th:maintheorem}. Let for some $\epsilon>0$
\begin{eqnarray*}a_{\min}^{(c)}&:=&-\min_{1 \leq i \leq n, j: (\mathbf{1}^T\mathbf{A}_{\boldsymbol{\kappa}}^{-1}\mathbf{A})_j>0}\frac{a_{ij}}{ (\mathbf{1}^T\mathbf{A}_{\boldsymbol{\kappa}}^{-1}\mathbf{A})_j}+\epsilon\\
&=&-\min_{1 \leq i \leq n, j: (\mathbf{1}^T\mathbf{A}_{\boldsymbol{\kappa}}^{-1}\mathbf{A})_j>0}\frac{a_{ij}}{ \sum_{k=1}^na_{kj}\hat{\kappa}_k}+\epsilon
\end{eqnarray*}

which is positive by our assumptions. Define
\begin{equation*} \tilde{\mathbf{A}}^{(c)}=(\tilde{a}_{ij}^{(c)})_{1\leq i \leq n, 1 \leq j \leq m} \;\; \mbox{with} \;\; \tilde{a}_{ij}=\frac{a_{ij}+a_{\min}^{(c)}\sum_{k=1}^na_{kj}\hat{\kappa}_k}{1+a_{\min}^{(c)}\sum_{k=1}^m\kappa_k}.\end{equation*}
We have thus $\tilde{a}_{ij}^{(c)}>0$ for those $j$ with $(\mathbf{1}^T\mathbf{A}_{\boldsymbol{\kappa}}^{-1}\mathbf{A})_j>0$ and all $1\leq i\leq n$. The rest of the proof for assertion (c) follows analogously to the proof of (a) and (b) which did not depend on the value of $a_{\min}>0$.
\end{proof}
\begin{lemma}\label{Lem:boundedbysum}
Let the assumptions of Proposition \ref{Pr:mainprop} hold and assume in addition that $a_{ij}>0$ for all $1\leq i \leq n$ and those $1 \leq j \leq m$ for which $\kappa_j>0$. Then for all $j$ with $\kappa_j>0$ there exists $\epsilon>0$ such that
\begin{equation}\label{Eq:boundedbysum}
\min_{1 \leq i \leq n}\sum\limits_{1 \leq k \leq m, k \neq j}\frac{a_{ik}}{a_{ij}}x_k \leq (1-\epsilon)\sum\limits_{1 \leq k \leq m, k \neq j}x_k
\end{equation}
for all $(x_k)_{1 \leq k \leq m, k \neq j} \in [0,\infty)^{m-1}$.
\end{lemma}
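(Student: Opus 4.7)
The plan is to argue by contradiction, exploiting the \emph{uniqueness} of $\boldsymbol{\kappa}$ as the optimal solution to \eqref{Eq:LOP} together with the tightness $\mathbf{A}\boldsymbol{\kappa}=\mathbf{1}$, both of which are in force under the assumptions of Proposition \ref{Pr:mainprop}.

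First I would fix $j$ with $\kappa_j>0$. The additional positivity hypothesis gives $a_{ij}>0$ for every $1\le i\le n$, so the left-hand side of \eqref{Eq:boundedbysum}, viewed as a function of $\mathbf{x}=(x_k)_{k\neq j}\ge\mathbf{0}$, is continuous (a minimum of linear forms) and positively homogeneous of degree $1$. Hence it attains its maximum $M$ on the compact simplex $S:=\{\mathbf{x}\ge\mathbf{0}:\sum_{k\neq j}x_k=1\}$. If $M<1$, then the choice $\epsilon:=1-M>0$ combined with homogeneity proves \eqref{Eq:boundedbysum} immediately. Otherwise I take a maximizer $\mathbf{x}^*\in S$, which by $M\ge 1$ satisfies
\[ \sum_{k\neq j}a_{ik}x_k^*\ \ge\ a_{ij}\qquad\text{for every }1\le i\le n. \]

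The key step is to perturb $\boldsymbol{\kappa}$ in a way that contradicts its uniqueness. For $t\in(0,1]$, define $\boldsymbol{\kappa}'$ by $\kappa_j':=\kappa_j(1-t)$ and $\kappa_k':=\kappa_k+\kappa_j t\,x_k^*$ for $k\neq j$. Then $\boldsymbol{\kappa}'\ge\mathbf{0}$, and $\sum_{k\neq j}x_k^*=1$ gives $\sum_k\kappa_k'=\sum_k\kappa_k$. Using $(\mathbf{A}\boldsymbol{\kappa})_i=1$ and the displayed inequality,
\[ (\mathbf{A}\boldsymbol{\kappa}')_i\ =\ 1+\kappa_j t\Bigl(\sum_{k\neq j}a_{ik}x_k^*-a_{ij}\Bigr)\ \ge\ 1 \]
for every $i$, so $\boldsymbol{\kappa}'$ is a feasible solution of \eqref{Eq:LOP} with the same objective value as $\boldsymbol{\kappa}$. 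But $\kappa_j'=\kappa_j(1-t)<\kappa_j$ forces $\boldsymbol{\kappa}'\neq\boldsymbol{\kappa}$, contradicting the assumed uniqueness.

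The only substantive point is identifying the right perturbation: take mass $\kappa_j t$ off coordinate $j$ and redistribute the same amount along $\mathbf{x}^*$. The inequality $\sum_{k\neq j}a_{ik}x_k^*\ge a_{ij}$ is precisely what ensures the $i$-th constraint still holds after the swap, while the tightness $\mathbf{A}\boldsymbol{\kappa}=\mathbf{1}$ is what permits a decrease at coordinate $j$ in the first place. The compactness/homogeneity reduction and the bookkeeping verification for $\boldsymbol{\kappa}'$ are routine.
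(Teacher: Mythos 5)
Your proof is correct and follows essentially the same route as the paper's: both contradict the uniqueness of $\boldsymbol{\kappa}$ by shifting mass off coordinate $j$ and redistributing it along a simplex direction where $\min_i\sum_{k\neq j}(a_{ik}/a_{ij})x_k\ge 1$ (nearly), which preserves the objective value and feasibility. The only difference is cosmetic: you extract an exact maximizer by compactness at the outset, whereas the paper works with a sequence of $\epsilon_l$-violators and passes to an accumulation point at the end.
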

\begin{proof} For ease of notation and w.l.o.g., let us assume that $\kappa_1>0$ and treat only the case $j=1$.
For $(x_k)_{2 \leq k \leq m}=\mathbf{0}$ the inequality holds for all $\epsilon>0$. The rest of the proof is by contradiction. Let $(\epsilon_l)_{l \in \mathbb{N}}$ be a sequence such that $\epsilon_l>0$ for all $l \in \mathbb{N}$ and $\epsilon_l \searrow 0$ for $l \to \infty$. Assume that for each $l$ there exists $(x_k^{(l)})_{2 \leq k \leq m} \in [0,\infty)^{m-1}\setminus\{\mathbf{0}\}$ such that
\begin{align}
\nonumber &&\min_{1 \leq i \leq n}\sum_{j=2}^m \frac{a_{ij}}{a_{i1}}x_j^{(l)} &\geq (1-\epsilon_l)\sum_{k=2}^m x_k^{(l)} & \\
\label{Eq:epsilonbound}&\Leftrightarrow & \sum_{j=2}^m a_{ij}\frac{x_j^{(l)}}{\sum_{k=2}^m x_k^{(l)}} &\geq (1-\epsilon_l)a_{i1}, & 1 \leq i \leq n,
\end{align}
where we used that $a_{i1}>0, 1 \leq i \leq n,$ by our assumption. Define now 
\begin{equation*}\tilde{\boldsymbol{\kappa}}^{(l)}=(\tilde{\kappa}_1^{(l)}, \ldots, \tilde{\kappa}_m^{(l)})^T=\left(0,\kappa_2+\frac{x_2^{(l)}}{\sum_{k=2}^m x_k^{(l)}}\kappa_1, \ldots, \kappa_m+\frac{x_m^{(l)}}{\sum_{k=2}^m x_k^{(l)}}\kappa_1\right)^T\geq \mathbf{0}\end{equation*}
for all $l \in \mathbb{N}$. We have
\begin{equation*} \sum_{j=1}^m\tilde{\kappa}_j^{(l)}=\sum_{j=1}^m \kappa_j \end{equation*}
for all $l \in \mathbb{N}$. Furthermore,
\begin{eqnarray}
\nonumber \sum_{j=1}^m a_{ij}\tilde{\kappa}_j^{(l)}&=& \sum_{j=2}^m a_{ij}\left(\kappa_j+\frac{x_j^{(l)}}{\sum_{k=2}^m x_k^{(l)}}\kappa_1\right)\\
\nonumber &=& \sum_{j=2}^m a_{ij} \kappa_j + \sum_{j=2}^ma_{ij}\frac{x_j^{(l)}}{\sum_{k=2}^m x_k^{(l)}}\kappa_1 \\
\label{Eq:contraepsilon}& \geq & 1-a_{i1}\kappa_1 +(1-\epsilon_l)a_{i1}\kappa_1=1-\epsilon_la_{i1}\kappa_1,
\end{eqnarray}
for all $l \in \mathbb{N}$, where we used $\mathbf{A}\boldsymbol{\kappa}\geq \mathbf{1}$ and \eqref{Eq:epsilonbound} in the last step. For $l \to \infty$, the bounded sequence $\tilde{\boldsymbol{\kappa}}^{(l)}$ must have an accumulation point $\tilde{\boldsymbol{\kappa}} \neq \boldsymbol{\kappa}$ (because $\tilde{\kappa}_1=0<\kappa_1$) and $\tilde{\boldsymbol{\kappa}} \geq \mathbf{0}$. But
\begin{equation*} \sum_{j=1}^m\tilde{\kappa}_j=\sum_{j=1}^m\kappa_j \;\;\; \mbox{and} \;\;\; \sum_{j=1}^ma_{ij}\tilde{\kappa}_j\geq 1 \end{equation*}
by \eqref{Eq:contraepsilon}, so our optimal solution $\boldsymbol{\kappa}$ would not be unique, in contradiction to our assumptions. Thus, for some $\epsilon>0$, the inequality \eqref{Eq:boundedbysum} holds for all $(x_j)_{2 \leq j \leq m} \in [0,\infty)^{m-1}$.
\end{proof}
\begin{proposition}\label{Lem:momentsconverge}
Assume that
\begin{align}\nonumber& \lim_{y \to \infty} \frac{P\left((X_j)_{1 \leq j \leq m} \in y \otimes_{\kappa}M(\mathbf{A})\right)}{\prod\limits_{j: \kappa_j>0}P(X_j>y^{\kappa_j})}\\
\label{Eq:Lemma34i} &=\int\limits_{M(\mathbf{A})}\prod_{j:\kappa_j>0}x_j^{-2}\lebesgue(\mbox{d}(x_j)_{\{j:\kappa_j>0\}}) \otimes P^{(X_j)_{\{j:\kappa_j=0\}}}(\mbox{d}(x_j)_{\{j:\kappa_j=0\}}) \in [0,\infty)
\end{align}
holds for all $X_1, \ldots, X_m$ and all matrices $\mathbf{A}$ which satisfy the assumptions of Proposition \ref{Pr:mainprop}, with $\boldsymbol{\kappa}$ being the unique solution to \eqref{Eq:LOP} and $M(\mathbf{A})$ as in Proposition \ref{Pr:mainprop}. Then also
\begin{align}&\nonumber \lim_{y \to \infty} \frac{\int\limits_{M(\mathbf{A})} \prod_{j=1}^m x_j^{\beta_j} P^{(X_j/y^{\kappa_j})_{\{1 \leq j \leq m\}}}(\mbox{d}\mathbf{x})}{\prod\limits_{j: \kappa_j>0}P(X_j>y^{\kappa_j})}\\
\label{Eq:Lemma34ii}&=\int\limits_{M(\mathbf{A})}\prod_{j=1}^m x_j^{\beta_j} \prod_{j:\kappa_j>0} x_j^{-2}\lebesgue(\mbox{d}(x_j)_{\{j:\kappa_j>0\}}) \otimes P^{(X_j)_{\{j:\kappa_j=0\}}}(\mbox{d}(x_j)_{\{j:\kappa_j=0\}}) \in [0,\infty)
\end{align}
for all $X_1, \ldots, X_m, \mathbf{A}$ and $\boldsymbol{\kappa}$ as above and all $\beta_j \in [0,1), 1 \leq j \leq m$.
\end{proposition}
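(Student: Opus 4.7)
My plan is to prove \eqref{Eq:Lemma34ii} by a layer--cake reduction to the assumed unweighted convergence \eqref{Eq:Lemma34i}. First, I would reduce to the case $\beta_j>0$ for all $j$, since coordinates with $\beta_j=0$ contribute only a trivial factor of $1$. For each remaining weight the representation $x^{\beta}=\beta\int_0^\infty t^{\beta-1}\mathbf{1}\{x>t\}\,dt$ for $x\ge 0$ together with Fubini gives
\begin{equation*}
\int_{M(\mathbf{A})}\prod_{j=1}^m x_j^{\beta_j}\,dP^{(X_j/y^{\kappa_j})_{1\le j\le m}}(\mathbf{x})
=\int_{(0,\infty)^m}\prod_{j=1}^m\beta_j t_j^{\beta_j-1}\,P^{(X_j/y^{\kappa_j})_{1\le j\le m}}\!\!\left(M(\mathbf{A})\cap\bigtimes_{j=1}^m(t_j,\infty)\right)d\mathbf{t}.
\end{equation*}
After dividing by $\prod_{j:\kappa_j>0}P(X_j>y^{\kappa_j})$, I would pass $y\to\infty$ inside the outer $\mathbf{t}$--integral by dominated convergence, then apply Fubini in reverse under the limit measure $\mu$ to reassemble $\int_{M(\mathbf{A})}\prod_j x_j^{\beta_j}\,d\mu$.

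For the pointwise limit in $\mathbf{t}$, Lemma~\ref{lem:vectorisrv} already establishes the $\mathbb{M}_{\mathbb{O}_n}$-convergence of $c(y)P^{(X_j/y^{\kappa_j})_{1\le j\le m}}$ to $\mu$ with $c(y)=(\prod_{j:\kappa_j>0}P(X_j>y^{\kappa_j}))^{-1}$ and $n=|\{j:\kappa_j>0\}|$. For every $\mathbf{t}$ with $t_j>0$ whenever $\kappa_j>0$, the set $M(\mathbf{A})\cap\bigtimes_{j}(t_j,\infty)$ is bounded away from $\mathbb{C}_n$; for Lebesgue--almost every such $\mathbf{t}$ it is additionally a $\mu$-continuity set, so the Portmanteau theorem of \cite{LiReRo14} yields
\begin{equation*}
c(y)\,P^{(X_j/y^{\kappa_j})_{1\le j\le m}}\!\left(M(\mathbf{A})\cap\bigtimes_{j}(t_j,\infty)\right)\longrightarrow\mu\!\left(M(\mathbf{A})\cap\bigtimes_{j}(t_j,\infty)\right).
\end{equation*}

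The main obstacle is to construct a $y$-uniform integrable dominating function. I would combine two estimates. On the one hand, the hypothesis \eqref{Eq:Lemma34i} together with monotonicity of the set in $\mathbf{t}$ gives a global bound $\le C_0$ for $y$ sufficiently large. On the other hand, dropping the $M(\mathbf{A})$--constraint, using independence, and applying Potter's bounds (cf.\ \cite{BiGoTe87}, Theorem~1.5.6) yields, for any fixed small $\epsilon>0$ and $y$ large,
\begin{equation*}
c(y)P^{(X_j/y^{\kappa_j})_{1\le j\le m}}\!\left(\bigtimes_{j}(t_j,\infty)\right)\le C\prod_{j:\kappa_j>0}\max(t_j^{-1+\epsilon},t_j^{-1-\epsilon})\prod_{j:\kappa_j=0}P(X_j>t_j).
\end{equation*}
Taking the pointwise minimum of these two bounds and multiplying by $\prod_j\beta_j t_j^{\beta_j-1}$ furnishes a majorant which, upon choosing $\epsilon$ small enough that $\max_j\beta_j+\epsilon<1$, is integrable on $(0,\infty)^m$: in regions where some $t_j\searrow 0$ with $\kappa_j>0$ the global bound keeps the integrand of order $t_j^{\beta_j-1}$; as $t_j\to\infty$ the Potter bound contributes $t_j^{\beta_j-2+\epsilon}$; and for $j$ with $\kappa_j=0$ the moment hypothesis $E(X_j^{1-\delta})<\infty$ yields $\int_0^\infty t^{\beta_j-1}P(X_j>t)\,dt=E(X_j^{\beta_j})/\beta_j<\infty$. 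Verifying that these two regimes patch together correctly across all of $(0,\infty)^m$ is the delicate case analysis at the core of the argument; once established, dominated convergence and the inverse Fubini step complete the proof.
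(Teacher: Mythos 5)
Your layer--cake strategy is genuinely different from the paper's proof, which avoids any uniform-integrability estimate by tilting the measures: it introduces $X_j'$ with $P^{X_j}$-density proportional to $x^{\beta_j}$, notes via Karamata's theorem that $(X_j')^{1-\beta_j}$ is again regularly varying with index $-1$ when $\kappa_j>0$, and rewrites the weighted integral in \eqref{Eq:Lemma34ii} \emph{exactly} as a probability of the form assumed in \eqref{Eq:Lemma34i} for the new variables, the rescaled matrix $((1-\beta_j)^{-1}a_{ij})$ and the rescaled solution $((1-\beta_j)\kappa_j)_j$. Your route would be attractive if it worked, but the step you yourself flag as ``delicate'' is a genuine gap, not a routine verification: the majorant $\min\bigl(C_0,\ \text{Potter product}\bigr)$ is not integrable against $\prod_j\beta_j t_j^{\beta_j-1}$ on $(0,\infty)^m$ as soon as two coordinates with $\kappa_j>0$ are present. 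Take $m=2$, $\kappa_1,\kappa_2>0$, and the region $t_1\in(0,1)$, $t_2>1$. There the Potter factor in $t_1$ is $t_1^{-1-\epsilon}$, so the product bound drops below $C_0$ only when $t_2\geq c\,t_1^{-(1+\epsilon)/(1-\epsilon)}$; on the complementary wedge $1<t_2< c\,t_1^{-(1+\epsilon)/(1-\epsilon)}$ the minimum equals $C_0$ and, with $\gamma:=(1+\epsilon)/(1-\epsilon)>1$,
\begin{equation*}
\int_0^1 t_1^{\beta_1-1}\int_1^{c\,t_1^{-\gamma}}t_2^{\beta_2-1}\,\mbox{d}t_2\,\mbox{d}t_1\;\asymp\;\int_0^1 t_1^{\beta_1-\gamma\beta_2-1}\,\mbox{d}t_1\;=\;\infty
\end{equation*}
whenever $\beta_1\le\beta_2$ (e.g.\ $\beta_1=0.1$, $\beta_2=0.9$). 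So dominated convergence cannot be invoked with the bounds you propose.

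The defect is structural: in the mixed regime (one $t_j$ small, another large) neither of your bounds uses the set $M(\mathbf{A})$, yet it is precisely the constraint $\prod_j x_j^{a_{ij}}>1$ that forces the remaining coordinates to be large when one is small, and extracting the resulting decay is the hard linear-programming analysis (Lemmas \ref{Lem:makepositive} and \ref{Lem:boundedbysum}, and the terms $I$ and $II$) that the paper carries out in proving Proposition \ref{Pr:mainprop}; a hybrid bound that keeps $M(\mathbf{A})$ for some coordinates and factors out the others is blocked because $M(\mathbf{A})$ couples the coordinates, so independence no longer factorizes the probability. Two further points need care even if the domination were repaired: for coordinates with $\kappa_j>0$ but $\beta_j=0$ your sets $M(\mathbf{A})\cap\bigtimes_{j:\beta_j>0}(t_j,\infty)$ are not bounded away from $\mathbb{C}_n$, so the Portmanteau step does not apply to them directly; and the $\mu$-null-boundary property of these sets for a.e.\ $\mathbf{t}$, while true, should be argued (atoms of $P^{X_j}$ for $\kappa_j=0$, and $\mu(\partial M(\mathbf{A}))=0$).
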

\begin{remark}\label{Rem:Karamata}
As the preceding proposition is used in the induction step of the proof of Proposition \ref{Pr:mainprop}, the convergence \eqref{Eq:Lemma34i} had to be assumed. However, since Proposition \ref{Pr:mainprop} shows that \eqref{Eq:Lemma34i} holds for all $X_1, \ldots, X_m$ and all matrices $\mathbf{A}$ which satisfy the assumptions, the convergence in \eqref{Eq:Lemma34ii} follows. The result may thus be regarded as a multivariate version of the direct half of Karamata's Theorem.
\end{remark}
\begin{proof}[Proof of Proposition \ref{Lem:momentsconverge}]
Define independent random variables $X'_j, 1 \leq j \leq m,$ such that $X'_j$ has $P^{X_j}$-density $x \mapsto \mathds{1}_{[0,\infty)}(x)x^{\beta_j}(E(X_j^{\beta_j}))^{-1}, 1 \leq j \leq m$. This is possible because all $\beta_j \in [0,1)$ and thus $E(X_j^{\beta_j})<\infty$ by our assumptions. For those $1 \leq j \leq m$ with $\kappa_j>0$ the random variable $X'_j$ is regularly varying with index $-(1-\beta_j)$, because
\begin{equation*} \lim_{x \to \infty}\frac{P(X'_j>x)}{x^{\beta_j}P(X_j>x)}=\lim_{x \to \infty}\frac{\int_x^\infty y^{\beta_j}P^{X_j}(\mbox{d}y)}{E(X^{\beta_j})x^{\beta_j}P(X_j>x)}=(E(X^{\beta_j})(1-\beta_j))^{-1} \end{equation*}
for all $1 \leq j \leq m$ by Karamata's Theorem (cf.\ \cite{Cl83}, Lemma 1.1). Thus, for $1 \leq j \leq m$ with $\kappa_j>0$ the random variable $\tilde{X}_j:=(X_j')^{1-\beta_j}$ is regularly varying with index $-1$ and
\begin{equation}\label{Eq:asymptildeX} \lim_{x \to \infty}\frac{P(\tilde{X}_j>x^{1-\beta_j})}{x^{\beta_j}P(X_j>x)}=(E(X^{\beta_j})(1-\beta_j))^{-1}.
\end{equation}
For $1 \leq j \leq m$ with $\kappa_j=0$ we have $\tilde{X}_j \geq 1$ a.s. because we assumed $X_j \geq 1$ a.s. Furthermore, we have for all $\delta \in (0,1]$ and all $j$ with $\kappa_j=0$ that
\begin{equation*} E(\tilde{X}_j^{1-\delta})=\frac{\int_1^\infty x^{(1-\delta)(1-\beta_j)}x^{\beta_j}P^{X_j}(\mbox{d}x)}{E(X_j^{\beta_j})}=\frac{E(X_j^{1-(1-\beta_j)\delta})}{E(X_j^{\beta_j})}<\infty.\end{equation*}
Thus, the random variables $\tilde{X}_j, 1 \leq j \leq m,$ satisfy the assumptions of Proposition \ref{Pr:mainprop}.
Set now
\begin{equation*} \tilde{\mathbf{A}}=(\tilde{a}_{ij}):=((1-\beta_j)^{-1}a_{ij}) \in \mathbb{R}^{n \times m}.\end{equation*}
Then $\tilde{\boldsymbol{\kappa}}:=((1-\beta_j)\kappa_j)_{1 \leq j \leq m}$ is the unique solution to the linear program
\begin{equation*} \mbox{find } \mathbf{x} \geq \mathbf{0} \; \mbox{such that } \tilde{\mathbf{A}} \mathbf{x} \geq \mathbf{1}, \;\;\; \sum_{i=1}^mx_i \to \min!
\end{equation*}
and $\tilde{\mathbf{A}}\tilde{\boldsymbol{\kappa}}=\mathbf{1}$. Set
\begin{equation*} M(\tilde{\mathbf{A}})=\left\{(x_1, \ldots, x_m): \prod_{j=1}^m x_j^{\tilde{a}_{ij}}>1, 1 \leq i \leq n\right\}.\end{equation*}
Then,
\begin{align*}
& &(\tilde{X}_j)_{1 \leq j \leq m}  \in y &\otimes_{\tilde{\boldsymbol{\kappa}}}M(\tilde{\mathbf{A}})& \\
& \Leftrightarrow & \prod_{j=1}^m \left(\frac{\tilde{X}_j}{y^{\tilde{\kappa}_j}}\right)^{\tilde{a}_{ij}}>1, & \;\;\;1 \leq i \leq n,&\\
& \Leftrightarrow & \prod_{j=1}^m \left(\frac{(X_j')^{1-\beta_j}}{y^{(1-\beta_j)\kappa_j}}\right)^{(1-\beta_j)^{-1}a_{ij}}>1, &\;\;\;1 \leq i \leq n,&\\
& \Leftrightarrow & (X_j')_{1 \leq j \leq m}  \in y &\otimes_{\boldsymbol{\kappa}}M(\mathbf{A}),&
\end{align*}
and so
\begin{eqnarray*}
P\left((\tilde{X}_j)_{1 \leq j \leq m} \in y \otimes_{\tilde{\boldsymbol{\kappa}}}M(\tilde{\mathbf{A}})\right)&=&\int\limits_{y \otimes_{\boldsymbol{\kappa}}M(\mathbf{A})} P^{(X_j')_{1 \leq j \leq m} }(\mbox{d}\mathbf{x})\\
&=&\int\limits_{y \otimes_{\boldsymbol{\kappa}}M(\mathbf{A})} \prod_{j=1}^m \frac{x_j^{\beta_j}}{E(X_j^{\beta_j})}P^{(X_j)_{1 \leq j \leq m} }(\mbox{d}\mathbf{x})\\
&=&\int\limits_{M(\mathbf{A})} \prod_{j=1}^m \frac{(y^{\kappa_j}x_j)^{\beta_j}}{E(X_j^{\beta_j})}P^{(X_j/y^{\kappa_j})_{1 \leq j \leq m} }(\mbox{d}\mathbf{x}).
\end{eqnarray*}
Thus,
\begin{align}
\nonumber & \lim_{y \to \infty} \frac{\int\limits_{M(\mathbf{A})} \prod_{j=1}^m x_j^{\beta_j} P^{(X_j/y^{\kappa_j})_{1 \leq j \leq m}}(\mbox{d}\mathbf{x})}{\prod\limits_{j: \kappa_j>0}P(X_j>y^{\kappa_j})}\\
\label{Eq:twofinalfactors}&= \lim_{y \to \infty} \frac{P\left((\tilde{X}_j)_{1 \leq j \leq m} \in y \otimes_{\tilde{\boldsymbol{\kappa}}}M(\tilde{\mathbf{A}})\right)}{\prod\limits_{j: \kappa_j>0}P(\tilde{X_j}>y^{\tilde{\kappa}_j})}\cdot \frac{\prod\limits_{j=1}^mE(X_j^{\beta_j})\prod\limits_{j: \kappa_j>0}P(\tilde{X_j}>y^{\tilde{\kappa}_j})}{\prod\limits_{j=1}^my^{\kappa_j\beta_j}\prod\limits_{j: \kappa_j>0}P(X_j>y^{\kappa_j})},
\end{align}
where the first factor converges to
\begin{equation}\label{Eq:limitAtilde} \int\limits_{M(\tilde{\mathbf{A}})}\prod_{j:\kappa_j>0}x_j^{-2}\lebesgue(\mbox{d}(x_j)_{\{j:\kappa_j>0\}}) \otimes P^{(\tilde{X}_j)_{\{j:\kappa_j=0\}}}(\mbox{d}(x_j)_{\{j:\kappa_j=0\}})
\end{equation}
by the assumption. Substitute $(y_1, \ldots, y_m):=(x_1^{(1-\beta_1)^{-1}}, \ldots, x_m^{(1-\beta_m)^{-1}})$ and note that $(x_1, \ldots, x_m) \in M(\tilde{\mathbf{A}})$ is equivalent to $(y_1, \ldots, y_m) \in M(\mathbf{A})$, so the expression in \eqref{Eq:limitAtilde} equals
\begin{eqnarray*}
&&\int\limits_{M(\mathbf{A})}\prod_{j:\kappa_j>0}(1-\beta_j)y_j^{\beta_j-2}\lebesgue(\mbox{d}(y_j)_{\{j:\kappa_j>0\}}) \otimes P^{(X_j')_{\{j:\kappa_j=0\}}}(\mbox{d}(y_j)_{\{j:\kappa_j=0\}}) \\
&=&\int\limits_{M(\mathbf{A})}\left(\prod_{j:\kappa_j>0}(1-\beta_j)y_j^{\beta_j-2}\right)\left(\prod_{j:\kappa_j=0}E(X_j^{\beta_j})^{-1}y_j^{\beta_j}\right)\\
&& \hspace{3cm} \lebesgue(\mbox{d}(y_j)_{\{j:\kappa_j>0\}}) \otimes P^{(X_j)_{\{j:\kappa_j=0\}}}(\mbox{d}(y_j)_{\{j:\kappa_j=0\}}).
\end{eqnarray*}
The second factor in \eqref{Eq:twofinalfactors} converges to
\begin{equation*}\frac{\prod_{j:\kappa_j=0}E(X_j^{\beta_j})}{\prod_{j:\kappa_j>0}(1-\beta_j)}\end{equation*} 
by \eqref{Eq:asymptildeX}. Taken together, this yields the statement of the proposition.
\end{proof}
\section*{Acknowledgement}
We thank an anonymous referee for useful comments which helped to improve an earlier version of this paper. This work was supported by the German Research Foundation DFG, grant no JA 2160/1.





\end{document}